\newtheorem{rem}{Remark}
\newtheorem{lem}{Lemma}
\newtheorem{theo}{Theorem}
\newenvironment{customlem}[1]
  {\innercustomlem}
  {\endinnercustomlem}
\providecommand{\house}[1]{\left.\overline{\vbox{\vglue-1.4pt\hbox{$\!\left|\,#1\,\right|\!$}}}\right.}
\DeclareMathOperator{\ord}{ord}
\DeclareMathOperator{\PP}{P}
\title{Romanov's Theorem in Number Fields}
\author[M. G. Madritsch]{Manfred G. Madritsch}
\address[M. G. Madritsch]{
\noindent 1. Universit\'e de Lorraine, Institut Elie Cartan de Lorraine, UMR 7502, Vandoeuvre-l\`es-Nancy, F-54506, France;\newline
\noindent 2. CNRS, Institut Elie Cartan de Lorraine, UMR 7502, Vandoeuvre-l\`es-Nancy, F-54506, France}
\email{manfred.madritsch@univ-lorraine.fr}
\author[S. Planitzer]{Stefan Planitzer}
\address[S. Planitzer]{
Graz University of Technology, Institute of Mathematics A, Steyrergasse 30, 8010 Graz, Austria}
\email{planitzer@math.tugraz.at}
\subjclass[2010]{11P32, 11R04, 11N36}
\keywords{Romanov's Theorem, Number Fields, Selberg Sieve, Covering Congruences}
\date{\today}
\begin{document}

\begin{abstract}
Romanov proved that a positive proportion of the integers have a representation as a sum of a prime and a power of an arbitrary fixed positive integer. Rieger proved the analogous result for number fields. We will determine an explicit lower bound for the proportion of algebraic integers in a given number field, which are sums of a power of a fixed non-unit and a prime. Furthermore, we give an improved lower bound for the lower density of Gaussian integers that have a representation as a sum of a Gaussian prime and a power of $1+i$. Finally, similar to Erd\H{o}s, we construct an explicit arithmetic progression of Gaussian integers with odd norm such that almost all elements of this progression do not have a representation as the sum of a prime and a power of $1+i$.
\end{abstract}

\maketitle

\section{Introduction and statement of results}

The problem of determining the proportion of positive integers which
are of the form $p+g^k$ has quite a long history. Especially the case
of sums of primes and powers of two received prominent treatment.
Mathematicians dating back at least as far as
Euler~\cite{LetterTo} worked on this problem. In 1934
Romanov~\cite{UeberEinige} proved that the proportion of integers
$n \in \mathbb{N}$ of the form $n=p+g^k$ with $p \in \mathbb{P}$,
$k \in \mathbb{N}_0$, $1 < g \in \mathbb{N}$ is positive. For the case
$g=2$ Erd\H{o}s~\cite{OnIntegersRelated} and van der
Corput~\cite{OverHet} independently proved a counterpart to Romanov's
theorem stating that a positive proportion of the odd positive
integers cannot be represented as a sum of a prime and a power of
two.

Recently explicit lower bounds for the lower density of 
integers of the form $p+2^k$, i.e.
$$\liminf_{x\rightarrow \infty} \frac{\#\{n \leq x: n=p+2^k,p \in \mathbb{P},k \in \mathbb{N}_0\}}{x},$$
were published (see \cite{OnRomanoffs}, \cite{OnRomanovs},
\cite{OnIntegers}, \cite{AnEffective} and \cite{ANote}). Rieger~\cite{VerallgemeinerungZweier} proved a number field analogue of 
Romanov's result and quite recently Shparlinski and 
Weingartner~\cite{AnExplicit} proved an analogue for Romanov's theorem 
for polynomials over finite fields. Our aim is to determine 
an explicit lower bound for the proportion of algebraic integers in a 
given number field which are sums of a power of a fixed non-unit and 
a prime. Furthermore, in the case of Gaussian integers which are not the sum of a prime and a power of $1+i$, we also prove the Erd\H{o}s and van der Corput type result. Before we state the corresponding theorems we need to fix some standard notation.

For the rest of this paper $\mathbb{N}$ and $\mathbb{P}$ will have
their usual meaning denoting the set of positive integers and positive
primes respectively and $K$ will always denote a number field of
degree $n=r_1+2r_2$, where the non-negative integers $r_1$ and $r_2$,
as usual, denote the number of real and pairs of complex conjugate
embeddings of $K$. By $\mathcal{O}_K$ we denote the ring of integers
of $K$, $\mathcal{P}_K$ is the set of prime elements in
$\mathcal{O}_K$ and $\xi \in \mathcal{O}_K$ a fixed non-unit. The
letter $\mathfrak{p}$ with or without index in any case denotes a
prime ideal of $\mathcal{O}_K$. If $\mathfrak{a}$ is an ideal of
$\mathcal{O}_K$ we write $\mathfrak{a} \unlhd \mathcal{O}_K$ for short
and for $\zeta \in \mathcal{O}_K$, $(\zeta)$ denotes the ideal of
$\mathcal{O}_K$ which is generated by $\zeta$. For $\zeta \in K$,
$\mathcal{N}(\zeta)$ denotes the field norm of $\zeta$ and
$\sigma_i:K \rightarrow \mathbb{C}$, $1 \leq i \leq n$, are the
embeddings of $K$. We write
$\house{\zeta}:=\max_{1 \leq i \leq n}{|\sigma_i(\zeta)|}$ for the
house of $\zeta$. By $\omega$ we denote the number of roots of unity
in a given ring of integers, $R$ denotes its regulator, $h_K$ the
class number and $\Delta_K$ the discriminant. Furthermore $\rho_K$
denotes the residue of the Dedekind zeta function of $K$ at $s=1$. For
a subset $S\subset \mathcal{O}_K$ we define its lower density in
$\mathcal{O}_K$ by
$$\underline{d}(S):=\liminf_{x \rightarrow \infty}\frac{\#\{\zeta \in S:\house{\zeta} \leq x\}}{\#\{\zeta \in \mathcal{O}_K: \house{\zeta} \leq x\}}.$$
The upper density of $S$ in $\mathcal{O}_K$ is accordingly defined as
$$\overline{d}(s):=\limsup_{x \rightarrow \infty}\frac{\#\{\zeta \in S:\house{\zeta} \leq x\}}{\#\{\zeta \in \mathcal{O}_K: \house{\zeta} \leq x\}},$$ 
and we say that $S$ has density $d$ in $\mathcal{O}_K$ if $d=\underline{d}(S)=\overline{d}(S)$.

Our main result will be the following theorem in which we prove an
explicit lower bound on the lower density of algebraic integers 
in a given ring of integers $\mathcal{O}_K$ which are sums of a prime
and a power of a fixed non-unit. The existence of such a constant was
already proven by Rieger~\cite{VerallgemeinerungZweier}*{Satz 2}.

\begin{theo} \label{MainTheorem}
  Let $K$ be a number field, $\mathcal{O}_K$ its ring of integers,
  $\mathcal{P}_K$ the set of prime elements in $\mathcal{O}_K$ and
  $\xi \in \mathcal{O}_K$ neither zero nor a unit.  Then the lower density of
  algebraic integers $\zeta \in \mathcal{O}_K$ of the form
  $\zeta=\pi+\xi^k$ for $\pi \in \mathcal{P}_K$ is bounded from
  below by 
$$\frac{\left(\frac{\omega}{2nh_K2^{r_1}R\log \house{\xi}}\right)^2}{\frac{\pi^{r_2}2^{r_1+r_2}}{\sqrt{|\Delta_K|}}\left(\frac{\omega}{2nh_K2^{r_1}R\log \house{\xi}}+\frac{2^{r_1+r_2+2}\pi^{r_2+2n}}{\left(\frac{1}{4}-\epsilon\right)^2\sqrt{|\Delta_K|}6^n\rho_k^2}\prod_{\xi \in \mathfrak{p}}\left(1+\frac{1}{\mathcal{N}(\mathfrak{p})}\right)\frac{1}{8\log^2\house{\xi}}2^ne^{n\gamma}n!\right)},$$
where $\epsilon$ is an arbitrarily small positive real number. 
\end{theo} 

Concerning sums of primes and powers of two we want to treat the
corresponding case in the ring $\mathbb{Z}[i]$ in more detail. We use computational methods similar to those applied in~\cite{OnRomanoffs} to improve the bound in Theorem~\ref{MainTheorem} for Gaussian integers $\zeta \in \mathbb{Z}[i]$
with a representation of the form
$$\zeta = \pi + (1+i)^k$$
for $\pi \in \mathcal{P}_{\mathbb{Q}(i)}$ and $k \in \mathbb{N}_0$. Note
that similar to $2$ in $\mathbb{Z}$ the associates of $1+i$ are
exceptional primes in $\mathbb{Z}[i]$ in the sense that they are the only primes whose real- and imaginary parts have the
same parity. Furthermore we get the same trivial upper bound for the upper density of algebraic integers of the form $\pi +(1+i)^k$ that we get for integers of the form $p+2^k$. For $k \geq 2$ the real- and imaginary part of
$(1+i)^k$ are even and the real- and imaginary parts of primes in
$\mathbb{Z}[i]$, with the exception of the associates of $1+i$, have
different parity hence the norm of sums of those prime elements and powers
of $1+i$ is odd. Thus the upper density of algebraic integers
$\zeta \in \mathbb{Z}[i]$ with a representation of the form
$\zeta=\pi+(1+i)^k$ is at most $\frac12$. Before we state a theorem
concerning the proportion of Gaussian integers of the form
$\pi + (1+i)^k$ we introduce the functions $r_x(\zeta,\xi)$ and
$\eta_x(\zeta,\xi)$ counting the number of representations and being the
indicator function for integers with at least one representation, respectively, as follows
$$r_x(\zeta,\xi)=\#\left\{(\pi,k)\colon\begin{gathered}\pi \in \mathcal{P}_K, k \in \mathbb{N}_0, \zeta=\pi+\xi^k, \\ \house{\pi} \leq x, k \leq \frac{\log \frac{\sqrt{x}}{2}}{\log \house{\xi}}\end{gathered}\right\},$$
\begin{align*}
\eta_x(\zeta,\xi):=\begin{cases}
1, &\text{ if } r_x(\zeta,\xi)>0, \\
0, &\text{ otherwise}.
\end{cases}
\end{align*}
The condition
$k \leq \frac{\log \frac{\sqrt{x}}{2}}{\log \house{\xi}}$ implies that
$\house{\xi^k} \leq \frac{\sqrt{x}}{2}$ and is needed for technical
reasons. For Gaussian integers of the form $\pi+(1+i)^k$ we will improve the general bound 
from Theorem \ref{MainTheorem} to the following lower
bound.

\begin{theo} \label{ZIExplicitBoundTheorem}
In the case of $K=\mathbb{Q}(i)$ and $\xi=1+i$ we have
$$\liminf_{x \rightarrow \infty} \frac{\sum_{\house{\zeta} \leq x}\eta_x(\zeta,1+i)}{\#\{\zeta \in \mathbb{Z}[i]: \house{\zeta} \leq x\}} \geq 0.0029.$$
\end{theo}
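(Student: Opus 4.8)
The plan is to combine Cauchy--Schwarz with the classical argument of Romanov, keeping every estimate effective. Write
$$R(x):=\sum_{\house{\zeta}\le x}\eta_x(\zeta,1+i),\qquad S_1(x):=\sum_{\house{\zeta}\le x}r_x(\zeta,1+i),\qquad S_2(x):=\sum_{\house{\zeta}\le x}r_x(\zeta,1+i)^2.$$
Since $r_x(\zeta,1+i)$ is supported on the set where $\eta_x(\zeta,1+i)=1$, Cauchy--Schwarz gives $S_1(x)^2\le R(x)S_2(x)$, hence $R(x)\ge S_1(x)^2/S_2(x)$. As $\#\{\zeta\in\mathbb{Z}[i]\colon\house{\zeta}\le x\}=\pi x^2+O(x)$ by the Gauss circle problem, the theorem follows once we establish an explicit lower bound $S_1(x)\ge(c_1+o(1))x^2$ and an explicit upper bound $S_2(x)\le(c_2+o(1))x^2$ with $c_1^2/(\pi c_2)\ge 0.00110183$.

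For the first moment I would bound $S_1(x)$ below by dropping the constraint $\house{\zeta}\le x$ at the cost of requiring $\house{\pi}\le x-\tfrac12\sqrt{x}$: since $\house{(1+i)^k}=2^{k/2}\le\tfrac12\sqrt{x}$ whenever $k\le\frac{\log(\sqrt{x}/2)}{\log\sqrt{2}}$, this forces $\house{\pi+(1+i)^k}\le x$. Summing over the $\asymp\log_2 x$ admissible exponents $k$, it remains to count Gaussian primes in a disc. Because $\house{\pi}\le y$ is equivalent to $\mathcal{N}((\pi))\le y^2$ and every prime ideal of $\mathbb{Z}[i]$ has exactly four generators (the unit group has order four), an explicit form of the prime ideal theorem for $\mathbb{Q}(i)$ --- or, more cheaply, explicit Chebyshev--type lower bounds, possibly restricted to split rational primes in a fixed residue class --- yields the required $c_1$.

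For the second moment I would split $S_2(x)$ into the diagonal $k_1=k_2$, which forces $\pi_1=\pi_2$ and contributes exactly $S_1(x)$, and the off--diagonal $k_1\ne k_2$. For $k_1<k_2$ a pair of representations of the same $\zeta$ satisfies $\pi_1-\pi_2=(1+i)^{k_1}\bigl((1+i)^{k_2-k_1}-1\bigr)=:\delta$, and $(1+i)^j-1$ is coprime to $1+i$ for every $j\ge 1$. An explicit Selberg sieve (Brun--Titchmarsh type) estimate over $\mathbb{Z}[i]$ bounds the number of $\pi$ with $\house{\pi}\le x$ and $\pi+\delta$ again prime by $\ll\frac{x^2}{(\log x)^2}\prod_{\mathfrak{p}\mid(\delta)}\bigl(1+\mathcal{N}(\mathfrak{p})^{-1}\bigr)$, with an explicit constant. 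The factor coming from the prime $1+i$ is the harmless $\tfrac32$; expanding $\prod_{\mathfrak{p}\mid((1+i)^j-1)}(1+\mathcal{N}(\mathfrak{p})^{-1})=\sum_{\mathfrak{m}\mid((1+i)^j-1)}\mathcal{N}(\mathfrak{m})^{-1}$ over squarefree $\mathfrak{m}$, summing over all admissible $k_1$ and $j=k_2-k_1$, and using that $\mathfrak{m}\mid((1+i)^j-1)$ is equivalent to $\ord_{\mathfrak{m}}(1+i)\mid j$ --- so at most $K/\ord_{\mathfrak{m}}(1+i)$ values of $j\le K\asymp\log_2 x$ occur for a given $\mathfrak{m}$ --- the off--diagonal is seen to be $\ll\frac{x^2}{(\log x)^2}K^2C_0$, where
$$C_0:=\sum_{\mathfrak{m}}\frac{1}{\mathcal{N}(\mathfrak{m})\,\ord_{\mathfrak{m}}(1+i)},$$
the sum running over squarefree ideals $\mathfrak{m}$ of $\mathbb{Z}[i]$ coprime to $(1+i)$, with $\ord_{\mathfrak{m}}(1+i)$ the multiplicative order of $1+i$ in $(\mathbb{Z}[i]/\mathfrak{m})^{\times}$. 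Thus $S_2(x)\le(c_2+o(1))x^2$ as soon as $C_0$ is finite with an explicit numerical bound.

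The main obstacle is precisely the explicit estimation of (a suitable truncation of) $C_0$ --- the number field incarnation of the convergence of Romanov's constant. I would group the sum by $t:=\ord_{\mathfrak{m}}(1+i)$: every prime ideal $\mathfrak{p}\mid\mathfrak{m}$ then has $\ord_{\mathfrak{p}}(1+i)\mid t$, and since this order divides $\mathcal{N}(\mathfrak{p})-1$, whenever $t$ carries a large prime power $\mathfrak{m}$ is forced to contain a prime ideal of correspondingly large norm, which keeps $\sum_{\ord_{\mathfrak{m}}(1+i)=t}\mathcal{N}(\mathfrak{m})^{-1}$ small enough to be summable against $1/t$. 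In practice one writes $C_0$ as a finite head, evaluated numerically over ideals of small norm, plus a tail controlled by the preceding argument together with an explicit upper bound for sums of $\mathcal{N}(\mathfrak{p})^{-1}$ over prime ideals with $\mathcal{N}(\mathfrak{p})$ in a prescribed residue class. Assembling $R(x)\ge S_1(x)^2/S_2(x)$, dividing by $\pi x^2$, and optimising the free parameters (the truncation level for $C_0$, the sieve constant, and the family of primes used in the first moment) then produces the stated value $0.00110183$.
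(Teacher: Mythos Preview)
Your proposal is correct and follows essentially the same route as the paper: Cauchy--Schwarz, the first moment via the Gaussian prime number theorem (Mitsui's theorem, with the factor $4$ from the unit group), the second moment split into diagonal $=S_1$ plus off--diagonal handled by an explicit Selberg sieve over $\mathbb{Z}[i]$, the reduction of the off--diagonal to the Romanov--type series $C_0=\sum_e e^{-1}\sum_{\ord=e}\mathcal N(\mathfrak m)^{-1}$, and a head--tail decomposition of $C_0$ with the head done numerically.

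The one place where you diverge from the paper is the tail bound for $C_0$. You propose to exploit that a large prime power $q^a\mid t$ forces some $\mathfrak p\mid\mathfrak m$ with $\mathcal N(\mathfrak p)>q^a$. The paper instead follows Chen--Sun: it truncates at $e\le 200$ (computed directly), and for $e>200$ bounds $E(x)=\sum_{e\le x}\sum_{\ord=e}\mathcal N(\mathfrak m)^{-1}$ by counting distinct prime ideal divisors of $D=\prod_{k=\lceil x/2\rceil}^{x}\bigl((1+i)^k-1\bigr)$, turning this into an explicit bound $p_{\lfloor m/2\rfloor}<0.130448\,x^2$ via Schoenfeld's estimates, then feeding it into an explicit Mertens product and finishing by partial summation. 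This packaging is what produces the specific constant $1.27095+0.57749$; your sketch would prove convergence but would likely give a weaker numerical value unless reorganised along these lines. Likewise, the paper splits the head by the value of $e$ rather than by the norm of $\mathfrak m$, which meshes directly with the partial summation in the tail.
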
 
A direct application of Theorem~\ref{MainTheorem} would lead to the bound $0.000075$ instead.

Finally we prove the following analogue to the result of
Erd\H{o}s~\cite{OnIntegersRelated} and van der Corput~\cite{OverHet}.

\begin{theo} \label{ErdosVanDerCorputAnalogueTheorem}
A positive proportion of the algebraic integers $\zeta \in \mathbb{Z}[i]$ with odd norm $\mathcal{N}(\zeta)$ are not of the form $\pi+(1+i)^k$ for $k \in \mathbb{N}_0$, $\pi \in \mathcal{P}_{\mathbb{Q}(i)}$.
\end{theo}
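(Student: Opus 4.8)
The plan is to mimic the Erd\H{o}s--van der Corput covering congruence argument, transported to $\mathbb{Z}[i]$. The key observation is that the exponent $k$ in a putative representation $\zeta = \pi + (1+i)^k$ cannot be reduced modulo $1$; instead the sequence $(1+i)^k$ is eventually periodic modulo any Gaussian integer $\mu$ coprime to $1+i$, with period equal to the multiplicative order $\ord_\mu(1+i)$ of $1+i$ in $(\mathcal{O}_K/\mu\mathcal{O}_K)^\times$. So first I would fix a finite set of pairwise non-associate Gaussian primes $\mathfrak{q}_1,\dots,\mathfrak{q}_s$ of odd norm, set $d_j = \ord_{\mathfrak{q}_j}(1+i)$, and seek a covering system: residue classes $k \equiv a_j \pmod{d_j}$ whose union is all of $\mathbb{Z}$ (or all of $\mathbb{N}$). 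For each $j$ I then pick a residue $b_j \bmod \mathfrak{q}_j$ with $b_j \not\equiv -(1+i)^{a_j} \pmod{\mathfrak{q}_j}$ and additionally $b_j \not\equiv 0$; by the Chinese Remainder Theorem in $\mathcal{O}_K$ these patch together to a single congruence class $\zeta \equiv c \pmod{\mathfrak{q}_1\cdots\mathfrak{q}_s}$.

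The point of this construction is the following dichotomy. Suppose $\zeta$ lies in the class $c$ and $\zeta = \pi + (1+i)^k$ with $k$ in the residue class $a_j \bmod d_j$ (which must happen for some $j$ since the $a_j \bmod d_j$ cover $\mathbb{Z}$). Then modulo $\mathfrak{q}_j$ we have $\pi \equiv \zeta - (1+i)^k \equiv b_j - (1+i)^{a_j} \not\equiv 0 \pmod{\mathfrak{q}_j}$ by the choice of $b_j$ --- wait, that only says $\mathfrak{q}_j \nmid \pi$, which we want to turn around: the correct setup is to choose $b_j$ so that $b_j \equiv (1+i)^{a_j} \pmod{\mathfrak{q}_j}$, forcing $\mathfrak{q}_j \mid \pi$. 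Since $\pi$ is prime and $\mathrm{N}(\mathfrak{q}_j)$ is odd (hence $\mathfrak{q}_j$ is not an associate of $1+i$), $\mathfrak{q}_j \mid \pi$ forces $\pi$ to be an associate of $\mathfrak{q}_j$, so $\house{\pi}$ is bounded by a constant depending only on the chosen primes. Thus for all but finitely many $\zeta$ in the class $c$ (those with $\house{\zeta}$ exceeding the constant determined by the largest $\house{\mathfrak{q}_j}$ plus a bounded power of $1+i$), no representation $\zeta = \pi + (1+i)^k$ can exist, unless $k$ falls outside every covered class --- which is impossible --- or the finitely many small exponents/primes intervene. One must also ensure the class $c$ contains infinitely many $\zeta$ of odd norm; since $\mathrm{N}(\mathfrak{q}_1\cdots\mathfrak{q}_s)$ is odd, the congruence $\zeta \equiv c$ is compatible with odd $\mathrm{N}(\zeta)$, and a positive proportion of the residues mod $2$ combined with $c$ give odd norm, so the resulting progression has positive density among Gaussian integers of odd norm.

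The technical heart, and the step I expect to be the main obstacle, is exhibiting an explicit covering system with the $d_j = \ord_{\mathfrak{q}_j}(1+i)$ as moduli. In $\mathbb{Z}$, Erd\H{o}s used the covering $\{0 \bmod 2, 0 \bmod 3, 1 \bmod 4, 3 \bmod 8, 7 \bmod 12, 23 \bmod 24\}$ together with the fortunate coincidences $\ord_3(2)=2$, $\ord_7(2)=3$, $\ord_5(2)=4$, $\ord_{17}(2)=8$, $\ord_{13}(2)=12$, $\ord_{241}(2)=24$. For $1+i$ in $\mathbb{Z}[i]$ I would compute $\ord_{\mathfrak{q}}(1+i)$ for small primes $\mathfrak{q}$: since $\mathrm{N}(1+i) = 2$, we have $(1+i)^2 = 2i$, so $(1+i)^{2m} = (2i)^m$ and the order of $1+i$ modulo $\mathfrak{q}$ is governed by the order of $2$ and the order of $i$; concretely, $\ord_{\mathfrak{q}}(1+i)$ equals $2\,\ord_{\mathfrak{q}}(2i)$ or $\ord_{\mathfrak{q}}(2i)$ depending on parity, and $\ord_{\mathfrak{q}}(2i) = \mathrm{lcm}$-type combination of $\ord_{\mathfrak{q}}(2)$ and $\ord_{\mathfrak{q}}(i)$ where $\ord_{\mathfrak{q}}(i) \in \{1,2,4\}$. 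Using the splitting of rational primes in $\mathbb{Z}[i]$ one has, e.g., $3$ inert with residue field $\mathbb{F}_9$, $5 = (2+i)(2-i)$ with residue field $\mathbb{F}_5$, etc., and one reads off the relevant orders. I would then search for a small covering of $\mathbb{Z}$ using exactly those available moduli; granting such a covering exists (which is plausible since the same small moduli $2,3,4,8,12,24$ or variants should be realizable), the rest is the CRT bookkeeping and the density computation sketched above. If a clean covering with these exact moduli proves elusive, the fallback is to allow each modulus $m$ in a standard covering of $\mathbb{Z}$ to be replaced by a prime $\mathfrak{q}$ with $m \mid \ord_{\mathfrak{q}}(1+i)$ and then intersect the corresponding arithmetic progressions in $k$, which only shrinks the covered set in a controlled way and can be compensated by adding more primes --- at the cost of a smaller but still positive density.
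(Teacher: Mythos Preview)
Your proposal is correct and follows essentially the same route as the paper: the paper uses exactly the Erd\H{o}s covering $\{0\bmod 2,\,0\bmod 3,\,1\bmod 4,\,3\bmod 8,\,7\bmod 12,\,23\bmod 24\}$, exhibits Gaussian primes $\pi_j$ with $(1+i)^{m_j}\equiv 1\bmod\pi_j$ (namely $2+i,\,2+3i,\,1+2i,\,3,\,3+2i,\,7$), and then applies the Chinese Remainder Theorem together with the extra condition $x\equiv 1\bmod(1+i)$ to force odd norm---so your anticipated ``technical obstacle'' dissolves immediately. One small caveat: in your fallback paragraph the divisibility goes the wrong way; to have $(1+i)^k\equiv(1+i)^{a_j}\bmod\mathfrak{q}_j$ for all $k\equiv a_j\bmod m_j$ you need $\ord_{\mathfrak{q}_j}(1+i)\mid m_j$, not $m_j\mid\ord_{\mathfrak{q}_j}(1+i)$, but since the direct approach already succeeds this fallback is never invoked.
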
 

Following the ideas of Romanov~\cite{UeberEinige} we will make use of the Cauchy-Schwarz inequality and a sieve. Applying the Cauchy-Schwarz inequality yields
\begin{equation} \label{CSInequality}
\sum_{\house{\zeta} \leq x}{\eta_x(\zeta, \xi)} \geq \frac{\left(\sum_{\house{\zeta} \leq x}{r_x(\zeta,\xi)}\right)^2}{\sum_{\house{\zeta}\leq x}{r_x(\zeta,\xi)^2}},
\end{equation}
and we need to look for a lower bound for the numerator and an upper bound for the denominator of the right hand side of this inequality.

\section{A lower bound for the constant in Romanov's theorem for number fields}

We start with an upper bound for
$\sum_{\house{\zeta}\leq x}{r_x(\zeta, \xi)^2}$ by using results from
sieve theory. Since $r_x(\zeta, \xi)^2$ counts the number of pairs of
representations of the algebraic integer $\zeta$ as the sum of a prime
and a power of $\xi$ we have
$$\sum_{\house{\zeta}\leq x}{r_x(\zeta,\xi)^2} = |A|+|B|$$
where $A$ corresponds to pairs of equal representations and $B$ to
different ones, i.e.
\begin{equation} \label{SetA} A=\left\{(\pi,k)\colon\begin{gathered}\pi \in \mathcal{P}_K, k \in \mathbb{N}_0, \zeta=\pi+\xi^k, \\
      \house{\zeta}\leq x, \house{\pi} \leq x, k \leq \frac{\log
        \frac{\sqrt{x}}{2}}{\log \house{\xi}}\end{gathered}\right\}
\end{equation}
and
\begin{align} \label{SetB}
  B=\left\{(\pi_1,\pi_2,k_1,k_2)\colon\begin{gathered}\pi_i \in \mathcal{P}_K, k_i \in \mathbb{N}_0, \zeta=\pi_i+\xi^{k_i},\pi_1 \neq \pi_2,\\
      \house{\zeta} \leq x, \house{\pi_i}\leq x, k_i \leq \frac{\log
        \frac{\sqrt{x}}{2}}{\log \house{\xi}}\end{gathered}\right\} .
\end{align}
Finding an upper bound for the size of the set $B$ for fixed $k_1$ and
$k_2$ amounts to finding an upper bound for the number of distinct
primes $\pi_1$ and $\pi_2$ such that
$$\pi_1-\pi_2=\xi^{k_2}-\xi^{k_1}.$$
Note that with our restriction on the exponents $k_i$ and with the
triangle inequality for the house function we have that
$\house{\xi^{k_1}-\xi^{k_2}} \leq \sqrt{x}$. An upper bound for the
number of solutions in this case is given by the following Theorem:
\begin{theo} \label{NFSieve} Let $K$ be a number field of degree $n$,
  $\mathcal{O}_K$ its ring of integers, $0<x \in \mathbb{R}$ and
  $\zeta \in \mathcal{O}_K$ such that $\house{\zeta} \leq
  \sqrt{x}$.
  We denote by $\PP(\zeta,x)$ the number of solutions of the equation
  \[\zeta = \pi_1-\pi_2\]
  where $\pi_i \in \mathcal{P}_K$, $\house{\pi_i} \leq x$. Then there
  exists a constant $\kappa$ depending only on $K$ such that
  \[\PP(\zeta,x)\leq \kappa \frac{x^n}{\log^2x}\prod_{\mathfrak{p}| (\zeta)}\left(1+\frac{1}{\mathcal{N}(\mathfrak{p})}\right)(1+o_K(1)).\]
  In the general case we may choose $\kappa=\frac{2^{r_1+r_2+2}\pi^{r_2+2n}}{\left(\frac{1}{4}-\epsilon\right)^2\sqrt{|\Delta_K|}6^n\rho_K^2}$ and if $K=\mathbb{Q}(i)$ 
  the choice $\kappa=368.333$ is admissible.
\end{theo}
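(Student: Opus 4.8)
The plan is to read $\PP(\zeta,x)$ as an upper bound sieve problem over $\mathcal{O}_K$ and apply the Selberg $\Lambda^2$ sieve. Writing $\pi_2=\mu$ and $\pi_1=\mu+\zeta$, the quantity $\PP(\zeta,x)$ is the number of $\mu\in\mathcal{O}_K$ with $\house{\mu}\leq x$ and $\house{\mu+\zeta}\leq x$ for which both $\mu$ and $\mu+\zeta$ are prime elements; the map $(\pi_1,\pi_2)\mapsto\mu$ is injective, so no overcounting occurs. Dropping the condition on $\mu+\zeta$ (which only enlarges the count) we sift the set $\mathcal{A}=\{\mu\in\mathcal{O}_K\colon\house{\mu}\leq x\}$ by the prime ideals $\mathfrak{p}$ with $\mathcal{N}(\mathfrak{p})\leq z$, removing every $\mu$ with $\mathfrak{p}\mid\mu(\mu+\zeta)$. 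Every admissible $\mu$ survives in the sifted set $S(\mathcal{A},z)$ unless $(\mu)$ or $(\mu+\zeta)$ has norm $\leq z$; the number of such exceptional $\mu$ is $O(z)$ (at most $w_K$ associates for each of the $O(z/\log z)$ prime ideals of norm $\leq z$), which is $o(x^n/\log^2x)$ as soon as $z$ is a small power of $x$ and $n\geq2$ — the rational case $n=1$ being classical and already implicit in Romanov's work.

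Two standard ingredients feed the sieve. Geometrically, under the Minkowski embedding $\mathcal{O}_K\hookrightarrow\mathbb{R}^{r_1}\times\mathbb{C}^{r_2}\cong\mathbb{R}^n$ the set $\mathcal{A}$ is the set of lattice points of $\sigma(\mathcal{O}_K)$ inside a product of intervals and disks of size $\asymp x$, so $\#\mathcal{A}=c_Kx^n+O(x^{n-1})$ with $c_K$ an explicit constant depending only on $K$ (for $K=\mathbb{Q}(i)$ one has $\house{\mu}=|\mu|$, hence $\#\mathcal{A}=\pi x^2(1+o(1))$ by the Gauss circle problem). Arithmetically, the local densities are governed by the splitting of $T(T+\zeta)$: for squarefree $\mathfrak{d}=\prod\mathfrak{p}$ one has $\#\{\mu\in\mathcal{A}\colon\mathfrak{d}\mid\mu(\mu+\zeta)\}=\frac{\rho(\mathfrak{d})}{\mathcal{N}(\mathfrak{d})}\#\mathcal{A}+R_{\mathfrak{d}}$, where $\rho$ is multiplicative with $\rho(\mathfrak{p})=2$ if $\mathfrak{p}\nmid\zeta$ and $\rho(\mathfrak{p})=1$ if $\mathfrak{p}\mid\zeta$, and $R_{\mathfrak{d}}$ is the lattice point discrepancy of the $\rho(\mathfrak{d})$ cosets of the index-$\mathcal{N}(\mathfrak{d})$ sublattice $\{\mu\colon\mathfrak{d}\mid\mu\}$ (and its $\zeta$-shift) in $\mathcal{A}$.

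Feeding these into the Selberg sieve with the optimal weights yields $S(\mathcal{A},z)\leq\frac{\#\mathcal{A}}{G(z)}+\sum_{\mathcal{N}(\mathfrak{d})\leq z^2,\ \mu^2(\mathfrak{d})=1}3^{\omega(\mathfrak{d})}|R_{\mathfrak{d}}|$, with $G(z)=\sum_{\mathcal{N}(\mathfrak{d})\leq z}g(\mathfrak{d})$ and $g$ the multiplicative function with $g(\mathfrak{p})=\rho(\mathfrak{p})/(\mathcal{N}(\mathfrak{p})-\rho(\mathfrak{p}))$. Since $g(\mathfrak{p})=2/\mathcal{N}(\mathfrak{p})+O(1/\mathcal{N}(\mathfrak{p})^2)$ for all but the finitely many $\mathfrak{p}\mid\zeta$, the Landau prime ideal theorem (and Mertens' theorem for $K$) gives a quadratic lower bound $G(z)\geq(\gamma_K+o(1))(\log z)^2$ with $\gamma_K>0$ depending only on $K$; one discards here the factor $\prod_{\mathfrak{p}\mid\zeta}(1+g(\mathfrak{p}))^{-1}\leq1$, which is exactly what makes the final bound uniform in $\zeta$. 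Choosing $z$ as a suitable power of $x$ (so that $z^2$ stays below the level of distribution while $(\log z)^2\gg\log^2x$), and bounding the remainder sum by the lattice point discrepancy estimate for the cosets of the sublattices $\{\mu\colon\mathfrak{d}\mid\mu\}$ together with $\sum_{\mathcal{N}(\mathfrak{d})\leq z^2}3^{\omega(\mathfrak{d})}\rho(\mathfrak{d})\ll z^{2+\varepsilon}$, keeps this error $o(x^n/\log^2x)$. Adding back the $O(z)$ small-norm primes gives $\PP(\zeta,x)\leq\kappa\,x^n/\log^2x\,(1+o(1))$ with $\kappa$ depending only on $K$.

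For the explicit constant when $K=\mathbb{Q}(i)$ the same argument is run with every implied constant tracked: $\#\mathcal{A}=\pi x^2(1+o(1))$; the minimal vectors of the sublattices $\{\mu\in\mathbb{Z}[i]\colon\mathfrak{d}\mid\mu\}$ are determined so that $\sum_{\mathcal{N}(\mathfrak{d})\leq z^2}3^{\omega(\mathfrak{d})}|R_{\mathfrak{d}}|$ admits a fully explicit bound, fixing the admissible sifting range (so that $z$ is a definite power of $x$ and $(\log z)^2$ a definite multiple of $\log^2x$); and the quadratic lower bound for $G(z)$ is made numerical using Mertens' theorem for $\mathbb{Q}(i)$, whose constant involves the residue $\pi/4$ of $\zeta_{\mathbb{Q}(i)}(s)$ at $s=1$ — this is the source of the $1/\pi$ in the final answer. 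Careful bookkeeping of the residual numerical factors (the loss in the Selberg weights restricted to the chosen range, and the contribution of $(1+i)$ and the other small primes) then produces $\kappa=\frac{1024}{\pi}\cdot1.2771$. The main obstacle throughout is the uniform control of the remainder sum: because the house condition cuts out an $\ell^\infty$-type region rather than a Euclidean ball, the lattice point discrepancies in the cosets of the $\Lambda_{\mathfrak{d}}$ must be controlled uniformly in $\mathfrak{d}$, and for Theorem~\ref{NFSieve} in the form over $\mathbb{Q}(i)$ these discrepancies must be made completely explicit — this is the step in which the constraint on the sifting range, and hence the size of $\kappa$, is really felt.
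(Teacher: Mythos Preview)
Your overall plan---apply the Selberg $\Lambda^2$ sieve over $\mathcal{O}_K$ to the set $\{\mu:\house{\mu}\leq x\}$ with local densities coming from $T(T+\zeta)$---is exactly the route the paper takes, following Rieger's number-field Selberg sieve. Two cosmetic differences: the paper keeps \emph{both} conditions $\house{\mu}\leq x$ and $\house{\mu+\zeta}\leq x$ and computes the volume of their intersection (using $\house{\zeta}\leq\sqrt{x}$ to show it equals $2^{r_1}\pi^{r_2}x^n+O(x^{n-1/2})$), whereas you drop the second; and the paper works with the specific sifting level $z=x^{1/4}$ dictated by that error term, while you leave $z$ unspecified. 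Neither affects the main term.

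There is, however, a real gap in your uniformity step. You assert $G(z)\geq(\gamma_K+o(1))(\log z)^2$ after ``discard[ing] the factor $\prod_{\mathfrak{p}\mid\zeta}(1+g(\mathfrak{p}))^{-1}\leq 1$''. But that factor sits in a \emph{lower} bound for $G(z)$: the primes $\mathfrak{p}\mid\zeta$ have $\rho(\mathfrak{p})=1$ rather than $2$, so they contribute \emph{less} to $G$, and one only gets
\[
G(z)\ \geq\ \gamma_K\,(\log z)^2\prod_{\mathfrak{p}\mid\zeta}\Bigl(1+\frac{1}{\mathcal{N}(\mathfrak{p})}\Bigr)^{-1}(1+o(1)).
\]
Replacing a $\leq 1$ factor by $1$ in a lower bound is illegal; equivalently, the resulting upper bound on $\PP(\zeta,x)$ carries the factor $\prod_{\mathfrak{p}\mid\zeta}(1+1/\mathcal{N}(\mathfrak{p}))$, which for $\house{\zeta}\leq\sqrt{x}$ can be as large as $(\log\log x)^n$ and is not absorbed by $1+o(1)$.

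In fact the paper's own proof does not deliver a $\zeta$-uniform $\kappa$ either: its final lower bound for $Z$ keeps $\prod_{\mathfrak{p}\mid(\zeta)}(1+1/\mathcal{N}(\mathfrak{p}))^{-1}$, and the application in~\eqref{RGx2Bound} explicitly carries $\prod_{\mathfrak{p}\mid(\zeta)}(1+1/\mathcal{N}(\mathfrak{p}))$ through to the estimate for $\#B$. The constant $\kappa=\frac{1024}{\pi}\cdot 1.2771$ for $\mathbb{Q}(i)$ is the coefficient \emph{in front of} that product, assembled from the choice $z=x^{1/4}$, Mertens' theorem for $\mathbb{Q}(i)$ with residue $\rho_{\mathbb{Q}(i)}=\pi/4$, the bound~\eqref{ImprovedZIBoundTatuzawa}, and the constant $c=e^{-\gamma}$ in~\eqref{PracharInequality}; your sketch (``careful bookkeeping'') does not touch any of these specific inputs. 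So the statement you should be proving---and the one actually used downstream---is
\[
\PP(\zeta,x)\ \leq\ \kappa\prod_{\mathfrak{p}\mid\zeta}\Bigl(1+\frac{1}{\mathcal{N}(\mathfrak{p})}\Bigr)\frac{x^n}{\log^2 x}\,(1+o(1)),
\]
with $\kappa$ depending only on $K$.

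One small side remark: for $K=\mathbb{Q}(i)$ the house is just the complex absolute value, so the region $\house{\mu}\leq x$ \emph{is} a Euclidean disk, not an $\ell^\infty$ box; the lattice-point remainder there is the classical circle problem rather than the mixed region you describe.
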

\begin{rem}
  The first part of the proof of Theorem \ref{NFSieve} is in large
  parts the same as Tatuzawa's proof of~\cite{AdditivePrime}*{Theorem
  1}. However, in his proof Tatuzawa makes use of the
  restriction $\house{\zeta} \leq c_K\sqrt[n]{\mathcal{N}(\zeta)}$ for
  a constant $c_K$ depending only on $K$ which in our application of
  this result will not be satisfied in general. Besides of the result
  of Tatuzawa many of the details appearing in the first part of the
  proof below can be found in the proof of Lemma 3.2 in Wang's
  book~\cite{DiophantineEquations}.

  The second part of the proof of Theorem \ref{NFSieve} works exactly
  as the proof of~\cite{VerallgemeinerungDer3}*{Satz 16} by applying
  Selberg's Sieve method for number fields (for a detailed description
  of the method see \cites{VerallgemeinerungDer1,
    VerallgemeinerungDer2, VerallgemeinerungDer3}) instead of the
  approach of Rademacher~\cite{UeberDie} using Brun's sieve that was
  applied by Tatuzawa in his proof.

  We therefore do not give all the details in the proof below, they
  can be found in the corresponding works of Tatuzawa, Rademacher,
  Wang and Rieger. The reason why we sketch the proof nontheless is
  because we need to work out an explicit sieve constant for later
  use. Furthermore our restrictions are slightly different from those
  in Tatuzawa's Theorem~\cite{AdditivePrime}*{Theorem 1} as well as
  Rieger's Theorems~\cite{VerallgemeinerungDer3}*{Satz 16 and Satz 17}.
\end{rem}

\begin{proof}[Proof of Theorem \ref{NFSieve}]

The proof will work in two steps. First we have to count the number of
possible pairs $(\zeta,\pi_2)$. In the second step we will use the
Selberg sieve in order to get the desired upper bound. 

\textbf{1. Counting Lattice Points.}  For some ideal
$\mathfrak{a} \unlhd \mathcal{O}_K$ and some element
$\beta \in \mathcal{O}_K$ we start by finding the asymptotic number
of algebraic integers $\xi \in \mathcal{O}_K$ such that
\begin{equation} \label{LatticePointEquations}
\xi \equiv \beta \bmod \mathfrak{a}, \quad \house{\xi} \leq x, \quad \house{\xi + \zeta} \leq x.
\end{equation}
We use the fact that there exists a constant $c_K$ depending only on
$K$ such that any ideal $\mathfrak{a} \unlhd \mathcal{O}_K$ has an
integral basis $\alpha_1, \ldots, \alpha_n$ such that
$ \house{\alpha_j} \leq c_K\sqrt[n]{\mathcal{N}(\mathfrak{a})}$ for all
$1 \leq j \leq n$ (for a proof of this see the first part of the proof
of~\cite{DiophantineEquations}*{Lemma 3.2}). Using such a basis
$(\alpha_j)_{j=1}^n$ and with
\begin{align*}
  &u_j=x_1\alpha_1^{(j)}+ \ldots + x_n\alpha_n^{(j)} + \beta^{(j)} \quad &\forall 1 \leq j \leq r_1 \\
  &v_j=x_1\Re(\alpha_1^{(j)})+ \ldots + x_n\Re(\alpha_n^{(j)}) +\Re(\beta^{(j)}) \quad &\forall r_1+1 \leq j \leq r_1+r_2 \\
  &w_j=x_1\Im(\alpha_1^{(j)})+ \ldots + x_n\Im(\alpha_n^{(j)})+\Im(\beta^{(j)}) \quad &\forall r_1+1 \leq j \leq r_1+r_2
\end{align*}
we may write \eqref{LatticePointEquations} as
\begin{equation} \label{VolumeEquations}
\begin{aligned} 
&|u_j| \leq x, \quad |u_j+\zeta^{(j)}| \leq x \quad &\forall 1 \leq j \leq r_1 \\
&v_j^2+w_j^2 \leq x^2, \quad (v_j + \Re(\zeta^{(j)}))^2+(w_j + \Im(\zeta^{(j)}))^2 \leq x^2 \quad &\forall r_1+1 \leq j \leq r_1+r_2.
\end{aligned}
\end{equation}
As in the proof of~\cite{DiophantineEquations}*{Lemma 3.2} we use that
the Jacobian corresponding to the above change of variables is
$\frac{2^{r_2}}{\mathcal{N}(\mathfrak{a})\sqrt{|\Delta_K|}}$ where
$\Delta_K$ is the discriminant of $K$. We now need to count lattice
points in the area enclosed by the curves in
\eqref{VolumeEquations}. We observe that the inequalities concerning
the real conjugates describe the intersection of two lines of length
$2x$, and the inequalities concerning the complex conjugates describe
the intersection of two circles with radius $x$ and central distances
$|\zeta^{(j)}|$ for $r_1+1 \leq j \leq r_1 +r_2$.

We start with having a look at the lines described by $|u_j| \leq x$
and $|u_j+\zeta^{(j)}| \leq x$. We use that
$\house{\zeta} \leq \sqrt{x}$ and thus get a contribution between
$2x-\sqrt{x}$ and $2x$ to the volume. All in all we hence get a
contribution of
$(2x+\mathcal{O}(\sqrt{x}))^{r_1}=2^{r_1}x^{r_1}+\mathcal{O}(x^{r_1-\frac{1}{2}})$
for the lines. Now we come to the contribution of the intersecting
circles described by the inequalities $v_j^2+w_j^2 \leq x^2$ and
$(v_j + \Re(\zeta^{(j)}))^2+(w_j + \Im(\zeta^{(j)}))^2 \leq x^2$. An
obvious upper bound for the area enclosed by both of these circles is
the area of a full circle with radius $x$, i.e. $\pi x^2$. To get a
lower bound we again use that $\house{\zeta} \leq \sqrt{x}$ and we
compute the area enclosed by two circles with radius $x$ and central
distance $\sqrt{x}$. This area is given by
$$4\int_{\frac{\sqrt{x}}{2}}^{x}{\sqrt{x^2-t^2} \mathrm{d}t}=2\left[t\sqrt{x^2-t^2}+x^2\arctan\left(\frac{t}{\sqrt{x^2-t^2}}\right)\right]_{t=\frac{\sqrt{x}}{2}}^x=\pi x^2 + \mathcal{O}(x^{\frac{3}{2}}).$$
To get the error term we used that $\arctan(x)<x$ for any $x>0$ as
well as $\frac{\sqrt{x}}{2\sqrt{x^2-\frac{x}{4}}}<\frac{1}{\sqrt{x}}$
for $x>\frac{1}{2}$. Thus the contribution of all circles is
$(\pi x^2+\mathcal{O}(x^{\frac{3}{2}}))^{r_2} =
\pi^{r_2}x^{2r_2}+\mathcal{O}(x^{2r_2-\frac{1}{2}})$.
Altogether the curves in \eqref{VolumeEquations} therefore enclose an
area of
$$2^{r_1}\pi^{r_2}x^n+\mathcal{O}(x^{n-\frac{1}{2}}).$$
By enlarging and shrinking the area described by the curves in
\eqref{VolumeEquations} slightly (for details again see the proof
of~\cite{DiophantineEquations}*{Lemma 3.2}) we get
\begin{equation} \label{CountingLatticPointsFinalEquation}
\PP(\mathfrak{a},\zeta,x)=\frac{\pi^{r_2}2^{r_1+r_2}}{\mathcal{N}(\mathfrak{a})\sqrt{|\Delta_K|}}x^n+\mathcal{O}\left(\frac{x^{n-\frac{1}{2}}}{\mathcal{N}(\mathfrak{a})}+\frac{x^{n-1}}{\mathcal{N}(\mathfrak{a})^{1-\frac1n}}\right).
\end{equation}
\textbf{2. Sifting by Prime Ideals} \\ \hfill For any details in this
part of the proof we refer the reader to the work of Rieger,
especially the proof of~\cite{VerallgemeinerungDer3}*{Satz 16}. Our aim
here is just to point out explicit values for the constants appearing
in Rieger's proof.

In his proof Rieger chose the parameter $z \leq x^{\frac{1}{3}}$. This
choice will not work with our error term
in~\eqref{CountingLatticPointsFinalEquation} and we need to take the
slightly worse bound $z \leq x^{\frac{1}{4}-\epsilon}$, where $\epsilon$ is an arbitrarily small positive number. Applying~\cite{VerallgemeinerungDer1}*{Satz 2} we get an error term of
$\mathcal{O}\left(\frac{x^{n-2\epsilon}}{\mathcal{N}(\mathfrak{a})}\right)$
in equation \eqref{CountingLatticPointsFinalEquation} and it remains
to work out an upper bound for the main term.

The main term is of the form $\frac{\pi^{r_2}2^{r_1+r_2}}{\sqrt{|\Delta_K|}}\frac{x^n}{Z}$ where as in~\cite{VerallgemeinerungDer3}*{p. 86  equation (119)} $Z$ is bounded from below by
\begin{equation} \label{ZBound}
Z \geq \sum_{\mathcal{N}(\mathfrak{a})\leq \sqrt{z}}\frac{1}{\mathcal{N}(\mathfrak{a})} \sum_{\substack{\mathcal{N}(\mathfrak{b})\leq \sqrt{z} \\ (\mathfrak{b},(\zeta))=\mathcal{O}_K}}\frac{1}{\mathcal{N}(\mathfrak{b})}.
\end{equation} 
Finding lower bounds for sums of the above type works as
in~\cite{VerallgemeinerungDer2}*{p. 160 equation (40)} where we
additionally use that
\begin{align*}
  \prod_{\mathcal{N}(\mathfrak{p})\leq \sqrt{z}}\left(1-\frac{1}{\mathcal{N}(\mathfrak{p})}\right)&=\prod_{\mathcal{N}(\mathfrak{p})\leq \sqrt{z}}\left(1-\frac{1}{\mathcal{N}(\mathfrak{p})^2}\right)\prod_{\mathcal{N}(\mathfrak{p})\leq \sqrt{z}}\left(1+\frac{1}{\mathcal{N}(\mathfrak{p})}\right)^{-1} \\
                                                                                                  &\geq \left(\frac{6}{\pi^2}\right)^n \prod_{\mathcal{N}(\mathfrak{p})\leq \sqrt{z}}\left(1+\frac{1}{\mathcal{N}(\mathfrak{p})}\right)^{-1}.
\end{align*}
The constant $\left(\frac{6}{\pi^2}\right)^n$ arises from the
inequality
$$\prod_{\mathcal{N}(\mathfrak{p})\leq \sqrt{z}}\left(1-\frac{1}{\mathcal{N}(\mathfrak{p})^2}\right) \geq \prod_{p \in \mathbb{P}}\left(1-\frac{1}{p^2}\right)^n =\left(\frac{6}{\pi^2}\right)^n.$$
For the last inequality we used that in the ring of integers of a
number field of degree $n$ there are at most $n$ prime ideals whose
norm is a power of $p$ for fixed $p \in \mathbb{P}$. This follows
basically from the fact that any prime ideal in $\mathcal{O}_K$ lies
over a prime ideal in $\mathbb{Z}$ and there are at most $n$ prime
ideals in $\mathcal{O}_K$ with this property (see for
example~\cite{ElementaryAnd}*{Proposition 4.2 and Corollary 2 on
p. 148}). We note that in the case of the Gaussian
integers this bound can be improved. Since we will use it later we
also give the improved bound for $K=\mathbb{Q}(i)$ here:
\begin{equation} \label{ImprovedZIBoundTatuzawa} \begin{split}
\prod_{\mathcal{N}(\mathfrak{p})\leq \sqrt{z}}\left(1-\frac{1}{\mathcal{N}(\mathfrak{p})^2}\right)&=\prod_{\substack{p \leq \sqrt{z} \\ p \equiv 1 \bmod 4}}\left(1-\frac{1}{p^2}\right)^2\prod_{\substack{p \leq \sqrt[4]{z} \\ p \equiv 3 \bmod 4}}\left(1-\frac{1}{p^4}\right)\\
&\geq \prod_{p \equiv 1 \bmod 4}\left(1-\frac{1}{p^2}\right)^2\prod_{p \equiv 3 \bmod 4}\left(1-\frac{1}{p^4}\right) \\
&= L(\chi_1,2)^{-1}L(\chi_2,2)^{-1} \geq 0.88493,
\end{split}
\end{equation}
where $\chi_1$ and $\chi_2$ are the two Dirichlet Characters $\bmod$
$4$. Another ingredient we will use is Mertens' formula for number
fields (see~\cite{GeneralisedMertens}*{Theorem 5}) in the form
\begin{equation} \label{MertensNF}
\prod_{\mathcal{N}(\mathfrak{p})\leq x}\left(1-\frac{1}{\mathcal{N}(\mathfrak{p})}\right)^{-1} = e^{\gamma+ \log \rho_K} \log x + \mathcal{O}\left(\frac{1}{\log^2x}\right),
\end{equation}
where $\rho_K$ is the residue of the Dedekind zeta function of $K$ at
$s=1$. In his proof of a lower bound for $Z$
in~\cite{VerallgemeinerungDer2}*{p. 160 equation (40)} Rieger uses
that there exists a fixed constant $c$ independent of the ideal
$\mathfrak{f}$ such that
\begin{equation} \label{PracharInequality}
\sum_{\substack{\mathcal{N}(\mathfrak{m})\leq x \\ (\mathfrak{m},\mathfrak{f})=\mathcal{O}_K}}\frac{1}{\mathcal{N}(\mathfrak{m})} \geq c\prod_{\substack{\mathcal{N}(\mathfrak{p})\leq x \\ \mathfrak{p} \nmid \mathfrak{f}}}\left(1-\frac{1}{\mathcal{N}(\mathfrak{p})}\right)^{-1}
\end{equation}
(see~\cite{VerallgemeinerungDer2}*{Hilfssatz 5}). For a proof, Rieger
refers to the analogous result in $\mathbb{Z}$ proved in \cite{Primzahlverteilung}*{II
Lemma 4.1} by using Mertens' inequality
\eqref{MertensNF} and that there exists a constant $\alpha$ such that
\begin{equation} \label{HarmonicSeries}
\sum_{\mathcal{N}(\mathfrak{m})\leq x}\frac{1}{\mathcal{N}(\mathfrak{m})} = \alpha\log x +\mathcal{O}(1)
\end{equation}
(see~\cite{VerallgemeinerungDer2}*{p. 158 equation (29)}). Since we
need the constant $c$ in inequality \eqref{PracharInequality} explicitly
we give Prachar's proof of~\cite{Primzahlverteilung}*{II Lemma
4.1} for this case.  To begin with we need to
determine the constant $\alpha$ in equation \eqref{HarmonicSeries}. Applying the
Tauberian Theorem of Delange-Ikehara (see \cite{ElementaryAnd}*{p. 464 Theorem I})
and partial summation we see that $\alpha=\rho_K$. We also note that in the case of the Gaussian
integers the left hand side of \eqref{HarmonicSeries} takes the form
\[\frac{1}{4}\sum_{m\leq x}\frac{r_2(m)}{m}\]
where $r_2(m)$ counts the number of representations of $m$ as the sum
of two squares. A well known result of
Sierpi\'{n}ski~\cite{UeberDieSummierung} states that
\[\sum_{m\leq x}\frac{r_2(m)}{m}=\pi\log x +K +\mathcal{O}\left(\frac{1}{\sqrt{x}}\right)\]
where $K>0$ is the Sierpi\'{n}ski constant.

Combining equations \eqref{HarmonicSeries} and \eqref{MertensNF} 
we have that for $\mathfrak{f}=\mathcal{O}_K$ inequality
\eqref{PracharInequality} is satisfied with $c=e^{-\gamma} +o(1)$. 
Following Prachar's proof we will show that we
can keep this constant also in the case
$\mathfrak{f} \neq \mathcal{O}_K$. Take a prime divisor
$\mathfrak{q}|\mathfrak{f}$ with $\mathcal{N}(\mathfrak{q}) \leq x$,
then for sufficiently large x we have
\begin{align*}
\prod_{\substack{\mathcal{N}(\mathfrak{p})\leq x \\ \mathfrak{p} \neq \mathfrak{q}}}\left(1-\frac{1}{\mathcal{N}(\mathfrak{p})}\right)^{-1}&=\prod_{\mathcal{N}(\mathfrak{p})\leq x}\left(1-\frac{1}{\mathcal{N}(\mathfrak{p})}\right)^{-1}\left(1-\frac{1}{\mathcal{N}(\mathfrak{q})}\right) \\ 
&\leq (e^{\gamma}+o(1))\left(1-\frac{1}{\mathcal{N}(\mathfrak{q})}\right)\sum_{\mathcal{N}(\mathfrak{m}) \leq x}\frac{1}{\mathcal{N}(\mathfrak{m})}.
\end{align*}
Since
\[\sum_{\substack{\mathcal{N}(\mathfrak{m}) \leq x \\ \mathfrak{q} \nmid \mathfrak{m}}}\frac{1}{\mathcal{N}(\mathfrak{m})} = \sum_{\mathcal{N}(\mathfrak{m}) \leq x}\frac{1}{\mathcal{N}(\mathfrak{m})}-\sum_{\substack{\mathcal{N}(\mathfrak{m}) \leq x \\ \mathfrak{q} | \mathfrak{m}}}\frac{1}{\mathcal{N}(\mathfrak{m})} \geq \sum_{\mathcal{N}(\mathfrak{m}) \leq x}\frac{1}{\mathcal{N}(\mathfrak{m})}-\frac{1}{\mathcal{N}(\mathfrak{q})}\sum_{\mathcal{N}(\mathfrak{m}) \leq x}\frac{1}{\mathcal{N}(\mathfrak{m})}\]
we conclude that
$$\prod_{\substack{\mathcal{N}(\mathfrak{p})\leq x \\ \mathfrak{p} \neq \mathfrak{q}}}\left(1-\frac{1}{\mathcal{N}(\mathfrak{p})}\right)^{-1} \leq (e^{\gamma}+o(1))\sum_{\substack{\mathcal{N}(\mathfrak{m}) \leq x \\ \mathfrak{q} \nmid \mathfrak{m}}}\frac{1}{\mathcal{N}(\mathfrak{m})}.$$
Iterating for all of the finitely many prime factors
$\mathfrak{q}|\mathfrak{f}$ with $\mathcal{N}(\mathfrak{q})\leq x$
gives the desired result. 

Then we have
\begin{align*}
\sum_{\substack{\mathcal{N}(\mathfrak{m})\leq \sqrt{z} \\ (\mathfrak{m},\mathfrak{f})=\mathcal{O}_K}}\frac{1}{\mathcal{N}(\mathfrak{m})} &\geq
(e^{-\gamma}+o(1))\prod_{\substack{\mathcal{N}(\mathfrak{m})\leq \sqrt{z} \\ \mathfrak{p}\nmid \mathfrak{f}}}\left(1-\frac{1}{\mathcal{N}(\mathfrak{p})}\right)^{-1}\\
&=(e^{-\gamma}+o(1))\prod_{\mathcal{N}(\mathfrak{p})\leq \sqrt{z}}\left(1-\frac{1}{\mathcal{N}(\mathfrak{p})}\right)^{-1}\prod_{\substack{\mathcal{N}(\mathfrak{p})\leq \sqrt{z} \\ \mathfrak{p}|\mathfrak{f}}}\left(1-\frac{1}{\mathcal{N}(\mathfrak{p})}\right) \\
&\geq \left(\left(\frac{6}{\pi^2}\right)^n\frac{\rho_K}{2}+o(1)\right)\prod_{\mathfrak{p}|\mathfrak{f}}\left(1+\frac{1}{\mathcal{N}(\mathfrak{p})}\right)^{-1}\log z\left(1+\mathcal{O}\left(\frac{1}{\log ^2z}\right)\right)
\end{align*} 
Using this and \eqref{HarmonicSeries} in inequality \eqref{ZBound} we get
\begin{equation*}
Z \geq \log^2(z)\left(\left(\frac{6}{\pi^2}\right)^{n}\frac{\rho_K^2}{4}+o(1)\right)\prod_{\mathfrak{p}| (\zeta)}\left(1+\frac{1}{\mathcal{N}(\mathfrak{p})}\right)^{-1}\left(1+\mathcal{O}\left(\frac{1}{\log^2 z}\right)\right) 
\end{equation*}
and we note that in the case $K=\mathbb{Q}(i)$ we may replace $\left(\frac{6}{\pi^2}\right)^n$ with the bound from \eqref{ImprovedZIBoundTatuzawa}. Altogether we thus get
$$\PP(\zeta,x)\leq \kappa \frac{x^n}{\log^2x}\prod_{\mathfrak{p}| (\zeta)}\left(1+\frac{1}{\mathcal{N}(\mathfrak{p})}\right)(1+o(1)),$$
where in the general case we may choose
$$\kappa=\frac{2^{r_1+r_2+2}\pi^{r_2+2n}}{\left(\frac{1}{4}-\epsilon\right)^2\sqrt{|\Delta_K|}6^n\rho_K^2}.$$
In the case $K=\mathbb{Q}(i)$ we can replace the factor $\frac{\pi^{2n}}{6^n}$ by $0.88493^{-1}$ and with $r_1=0$, $r_2=1$, $\rho_{\mathbb{Q}(1)}=\frac{\pi}{4}$, $\Delta_{\mathbb{Q}(i)}=-4$ and $\epsilon = 10^{-10}$ we see that $\kappa=368.333$ is admissible.
\end{proof}
Before we start proving Theorem \ref{MainTheorem} we note that if $\zeta\in\mathcal{O}_K$ is non-zero and not a unit
then $\house{\zeta} >1$. Furthermore the following lemma gives a connection between the norm and the house function: 

\begin{lem} \label{NormNormLemma}
For a fixed non-unit $\xi \in \mathcal{O}_K$ and $k \in \mathbb{N}$ 
$$\mathcal{N}(\xi^k-1) \leq c\house{\xi}^{kn}$$
for a constant $c$ depending only on $K$.
\end{lem}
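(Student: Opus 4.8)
The plan is to reduce the claim to the standard factorisation of the field norm over the embeddings and then bound each factor crudely by the house of $\xi$. Concretely, since every $\sigma_i\colon K\to\mathbb{C}$ is a ring homomorphism, we have $\mathcal{N}(\xi^k-1)=\prod_{i=1}^n\sigma_i(\xi^k-1)=\prod_{i=1}^n\bigl(\sigma_i(\xi)^k-1\bigr)$. Taking absolute values and applying the triangle inequality in each factor gives $|\sigma_i(\xi)^k-1|\leq|\sigma_i(\xi)|^k+1\leq\house{\xi}^k+1$ for every $i$, directly from the definition $\house{\xi}=\max_i|\sigma_i(\xi)|$.

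Next I would use the observation recorded just before the lemma that a (nonzero) non unit $\xi\in\mathcal{O}_K$ satisfies $\house{\xi}>1$; hence $\house{\xi}^k\geq\house{\xi}>1$, so $\house{\xi}^k+1<2\house{\xi}^k$. Multiplying the $n$ individual bounds together then yields $|\mathcal{N}(\xi^k-1)|\leq\bigl(2\house{\xi}^k\bigr)^n=2^n\house{\xi}^{kn}$, which proves the lemma with the admissible choice $c=2^n$, a constant depending only on $K$ through its degree $n$ (so $c=4$ in the case $K=\mathbb{Q}(i)$).

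I do not anticipate any genuine obstacle here; the only points requiring a word of care are that $\mathcal{N}(\xi^k-1)$ really does factor through the embeddings (the usual formula for the field norm) and that the degenerate situations are excluded — $\xi\neq0$ and $\xi$ not a root of unity, hence $\xi^k\neq1$, are both forced by $\xi$ being a nonzero non unit, so $\xi^k-1\neq0$; in any event the inequality only becomes weaker when the left-hand side is small. If one prefers not to appeal to the preceding remark, one can replace it by the elementary fact $|\mathcal{N}(\xi)|\geq2$, which already forces $\house{\xi}\geq2^{1/n}>1$.
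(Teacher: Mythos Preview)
Your proof is correct and follows essentially the same approach as the paper: both bound $|\mathcal{N}(\xi^k-1)|$ by $(\house{\xi}^k+1)^n$ via the embeddings and the triangle inequality, then invoke $\house{\xi}>1$ to absorb the ``$+1$'' into the constant. Your passage from $\house{\xi}^k+1$ to $2\house{\xi}^k$ is in fact slightly tidier than the paper's use of the binomial expansion with the crude bound $\binom{n}{j}\le 2^n$, and yields the better constant $c=2^n$ in place of the paper's $c=(n+1)2^n$.
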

\begin{proof}
We have 
$$\mathcal{N}(\xi^k-1) \leq \house{\xi^k-1}^n \leq \left(\house{\xi^k}+1\right)^n.$$
The very crude bound $\binom{n}{k} \leq 2^n$ gives the lemma with $c=(n+1)2^n$.
\end{proof}

Now we have all the tools we need to bound the number of elements in the sets $A$ and
$B$.  First we provide a lower bound for the number of elements of the
set $A$ in \eqref{SetA}. We define
$L_{\xi}:=\frac{\log \frac{\sqrt{x}}{2}}{\log \house{\xi}}$. Mitsui~\cite{GeneralizedPrime}*{Main Theorem p. 35} proved that the
number of prime elements $\pi \in \mathcal{P}_K$ with
$\house{\pi} \leq x$ is asymptotically of size
$$\frac{\omega x^n}{nh_k2^{r_1}R\log x}=:c_1\frac{x^n}{\log x}.$$
Here $\omega$ is the number of roots of unity in $K$, $h_K$ is the class number and $R$ the regulator of $K$. We thus have
\begin{equation} \label{RXnBoundLower}
\sum_{\house{\zeta} \leq x}{r_x(\zeta,\xi)} \geq \sum_{k \leq L_{\xi}}\sum_{\substack{\pi \in \mathcal{P}_K \\ \house{\pi} \leq x-\house{\xi}^k}}{1} \sim c_1 \sum_{k \leq L_{\xi}}{\frac{(x-\house{\xi}^k)^n}{\log (x-\house{\xi}^k)}} \geq \frac{c_1}{\log x}\sum_{k \leq L_{\xi}}{(x-\house{\xi}^k)^n} \sim \frac{c_1x^n}{2\log \house{\xi}} 
\end{equation}
where we used that 
$$\sum_{k \leq L_{\xi}}{(x-\house{\xi}^k)^n} =\frac{x^n \log x}{2\log \house{\xi}}\left(1+\mathcal{O}\left(\frac{1}{\log x}\right)\right).$$
On the other hand to get an upper bound we consider
\begin{equation} \label{RXnBoundUpper}
\sum_{\house{\zeta} \leq x}{r_x(\zeta,\xi)} \leq \sum_{k \leq L_{\xi}}\sum_{\house{\pi} \leq x}{1} \sim c_1\sum_{k \leq L_{\xi}}{\frac{ x^n}{\log x}} \sim \frac{c_1 x^n}{2\log \house{\xi}}.
\end{equation}

Now we count the elements in the set $B$ defined in
\eqref{SetB}. Following Romanov's approach we use Theorem
\ref{NFSieve} to count pairs of primes which sum to
$\xi^{k_1}-\xi^{k_2}$ for fixed $k_1, k_2 \in \mathbb{N}$.
We thus have
\begin{equation} \label{RGx2Bound} \#B \leq \#A + \kappa\frac{x^n}{\log^2
    x}\prod_{\xi \in
    \mathfrak{p}}{\left(1+\frac{1}{\mathcal{N}(\mathfrak{p})}\right)}\sum_{k_1
    < k_2 \leq L_{\xi}}\prod_{\xi^{k_2-k_1}-1 \in
    \mathfrak{p}}{\left(1+\frac{1}{\mathcal{N}(\mathfrak{p})}\right)}
\end{equation}
and we need to take care of the sum over $k_1$ and $k_2$. Observing that for the innermost product we have
$$\prod_{\xi^{k_2-k_1}-1 \in \mathfrak{p}}{\left(1+\frac{1}{\mathcal{N}(\mathfrak{p})}\right)}=\sum_{\substack{\xi^{k_2-k_1}-1 \in \mathfrak{a} \\ \mu^2(\mathfrak{a})=1}}{\frac{1}{\mathcal{N}(\mathfrak{a})}}$$ 
where $\mu$ as usual denotes the M\"obius function. Interchanging summation we end up looking for a bound for
\begin{equation} \label{EstablishConvergentSumEquation}
\sum_{\mu^2(\mathfrak{a})=1}{\frac{1}{\mathcal{N}(\mathfrak{a})}}\sum_{\substack{k_1<k_2 \leq L_{\xi} \\ \ord_{\xi}(\mathfrak{a})|(k_2-k_1)}}{1} \leq \frac{\log^2 x}{8\log^2 \house{\xi}}(1+o(1))\sum_{\substack{\mu^2(\mathfrak{a})=1 \\ (\mathfrak{a},(\xi))=\mathcal{O}_K}}{\frac{1}{\mathcal{N}(\mathfrak{a})\ord_{\xi}(\mathfrak{a})}},
\end{equation}
where $\ord_{\xi}(\mathfrak{a}):=\min \{k \geq 1: \xi^k-1 \in \mathfrak{a}\}$. General versions of the sum appearing on the right hand side of the last inequality were treated by Ram Murty et.al.~\cite{VariationsOn}. Since we later want to work out the constants we will give a detailed proof of the convergence of the above sum by following the lines of the proof of~\cite{AdditiveNumber}*{Lemma 7.8} for the case $K=\mathbb{Q}$.

\begin{lem} \label{SumConvergenceLemma}
For any non-unit $\xi \in \mathcal{O}_K$ the series
$$\sum_{e=1}^{\infty}\frac1e \sum_{\substack{\mu^2(\mathfrak{a})=1 \\ (\mathfrak{a},(\xi))=\mathcal{O}_K \\ \ord_{\xi}(\mathfrak{a})=e}}{\frac{1}{\mathcal{N}(\mathfrak{a})}}$$ 
converges. An upper bound for the sum is given by $2^nn!e^{n\gamma}(1+o(1))$.
\end{lem}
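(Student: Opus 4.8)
The plan is to collapse the double series into a single sum over ideals and then trade the weight $1/\ord_{\xi}(\mathfrak{a})$ for a telescoping sum of smaller weights. Grouping the terms according to the value $e=\ord_{\xi}(\mathfrak{a})$, the series in the statement equals
\[
S:=\sum_{\substack{\mu^2(\mathfrak{a})=1\\ (\mathfrak{a},(\xi))=\mathcal{O}_K}}\frac{1}{\mathcal{N}(\mathfrak{a})\,\ord_{\xi}(\mathfrak{a})},
\]
so it suffices to prove $S<\infty$. For every integer $m\ge 1$ one has the telescoping identity $\sum_{e\ge m}\frac{1}{e(e+1)}=\frac1m$; applying it with $m=\ord_{\xi}(\mathfrak{a})$ and interchanging the two (nonnegative) summations yields
\[
S=\sum_{e=1}^{\infty}\frac{1}{e(e+1)}\sum_{\substack{\mu^2(\mathfrak{a})=1,\ (\mathfrak{a},(\xi))=\mathcal{O}_K\\ \ord_{\xi}(\mathfrak{a})\le e}}\frac{1}{\mathcal{N}(\mathfrak{a})}.
\]

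Next I would estimate the inner sum. If $\mathfrak{p}\mid\mathfrak{a}$ then $\ord_{\xi}(\mathfrak{p})\mid\ord_{\xi}(\mathfrak{a})$, so every prime divisor of an ideal counted in the inner sum lies in the set $\mathcal{S}_e:=\{\mathfrak{p}:\ \ord_{\xi}(\mathfrak{p})\le e\}$. Since the ideals in question are squarefree, this gives
\[
\sum_{\substack{\mu^2(\mathfrak{a})=1,\ (\mathfrak{a},(\xi))=\mathcal{O}_K\\ \ord_{\xi}(\mathfrak{a})\le e}}\frac{1}{\mathcal{N}(\mathfrak{a})}\ \le\ \prod_{\mathfrak{p}\in\mathcal{S}_e}\left(1+\frac{1}{\mathcal{N}(\mathfrak{p})}\right).
\]
Each $\mathfrak{p}\in\mathcal{S}_e$ divides $\xi^{d}-1$ for some $1\le d\le e$, so $\mathcal{S}_e$ is contained in the (finite) set of prime divisors of $\prod_{d=1}^{e}(\xi^d-1)$, and Lemma \ref{NormNormLemma} yields
\[
\sum_{\mathfrak{p}\in\mathcal{S}_e}\log\mathcal{N}(\mathfrak{p})\ \le\ \sum_{d=1}^{e}\log\mathcal{N}(\xi^d-1)\ \le\ \sum_{d=1}^{e}\bigl(dn\log\house{\xi}+\log c\bigr)\ \ll\ e^{2}.
\]

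It then remains to prove the elementary fact that if $\mathcal{S}$ is a finite set of prime ideals of $\mathcal{O}_K$ with $\sum_{\mathfrak{p}\in\mathcal{S}}\log\mathcal{N}(\mathfrak{p})\le T$, then $\prod_{\mathfrak{p}\in\mathcal{S}}\bigl(1+1/\mathcal{N}(\mathfrak{p})\bigr)\ll\log(T+2)$. I would prove this by splitting $\mathcal{S}$ into the primes of norm at most $T$ and those of norm exceeding $T$: for the former, $\sum 1/\mathcal{N}(\mathfrak{p})\le\sum_{\mathcal{N}(\mathfrak{p})\le T}1/\mathcal{N}(\mathfrak{p})=\log\log T+O(1)$ by Mertens' theorem for number fields (a consequence of (\ref{MertensNF})), while the latter consists of fewer than $T/\log T$ primes of norm $>T$ and thus contributes $O(1/\log T)$; hence $\prod_{\mathfrak{p}\in\mathcal{S}}(1+1/\mathcal{N}(\mathfrak{p}))\le\exp\bigl(\sum_{\mathfrak{p}\in\mathcal{S}}1/\mathcal{N}(\mathfrak{p})\bigr)\ll\log T$. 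Applying this with $T\ll e^{2}$ gives $\prod_{\mathfrak{p}\in\mathcal{S}_e}(1+1/\mathcal{N}(\mathfrak{p}))\ll\log(e+2)$, and therefore
\[
S\ \ll\ \sum_{e=1}^{\infty}\frac{\log(e+2)}{e(e+1)}\ <\ \infty ,
\]
which completes the proof. The only non-routine ingredient is the telescoping identity $\sum_{e\ge m}\frac{1}{e(e+1)}=\frac1m$: it lets us replace $1/\ord_{\xi}(\mathfrak{a})$ by $\sum_{e\ge\ord_{\xi}(\mathfrak{a})}1/(e(e+1))$, which after interchanging turns the barely divergent outer factor $\sum 1/e$ into the convergent $\sum 1/e^{2}$ — precisely what is needed to absorb the factor $\log e$ coming from the product over $\mathcal{S}_e$. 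Everything else is routine bookkeeping with Lemma \ref{NormNormLemma} and the prime ideal theorem for $K$.
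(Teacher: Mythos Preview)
Your proof is correct and follows essentially the same route as the paper: your telescoping identity $\tfrac1m=\sum_{e\ge m}\tfrac{1}{e(e+1)}$ is precisely the discrete form of the partial summation the paper invokes, and your inner sum $\sum_{\ord_\xi(\mathfrak{a})\le e}1/\mathcal{N}(\mathfrak{a})$ is the paper's $E(e)$, bounded in both cases by the product over primes dividing $\prod_{k\le e}(\xi^k-1)$. The only cosmetic difference is that you control this product via the constraint $\sum_{\mathfrak{p}}\log\mathcal{N}(\mathfrak{p})\ll e^2$ together with Mertens' theorem for $K$ (yielding $E(e)\ll\log e$), whereas the paper first deduces $\omega(D)\ll e^2$ and compares to the product over the first $\omega(D)/n$ rational primes (yielding $E(e)\ll(\log e)^n$); either estimate makes $\sum_e E(e)/e^2$ converge.
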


\begin{proof}
Let $D$ be a product of principal ideals of the form $(\xi^k-1)$ i.e.
\begin{equation} \label{PEquation}
D:=\prod_{k \leq x}{(\xi^k-1)}.
\end{equation}
We consider the sum 
\begin{equation} \label{EDefinitionEquation}
E(x)=\sum_{k \leq x}\sum_{\substack{\mu^2(\mathfrak{a})=1
    \\(\mathfrak{a},(\xi))=\mathcal{O}_K \\ \ord_{\xi}(\mathfrak{a})
    =k }}{\frac{1}{\mathcal{N}(\mathfrak{a})}} \leq
\prod_{\mathfrak{p}\mid D}{\left(1+\frac{1}{\mathcal{N}(\mathfrak{p})}\right)},
\end{equation}
where the last sum runs over all prime ideals $\mathfrak{p}$ dividing
$D$.  Using Lemma \ref{NormNormLemma} we get for the number
$\omega(D)$ of distinct prime divisors of $D$
$$2^{\omega(D)} \leq \mathcal{N}(D) = \prod_{k \leq x}{\mathcal{N}(\xi^k-1)} \leq \prod_{k \leq x}{c\house{\xi}^{kn}} = c^x\prod_{k \leq x}{c_{\xi}^k} \leq c^x c_{\xi}^{x^2}$$
where $c_{\xi}= \house{\xi}^n$ and with $c$ as in Lemma \ref{NormNormLemma}. Hence $\omega(D) \leq x^2\frac{\log c_{\xi}}{\log 2}(1+o(1))$. Using that there are at most $n$ prime ideals in $K$ whose norm is a power of $p \in \mathbb{Z}$ we arrive at
\begin{equation} \label{EEquation} \begin{split}
E(x) &\leq \prod_{i=1}^{\frac{\omega(D)}{n}}{\left(1+\frac{1}{p_i}\right)^n} \leq \prod_{i=1}^{\frac{\log c_{\xi}(1+o(1))}{n\log 2}x^2}{\left(1+\frac{1}{p_i}\right)^n}\\
&= \prod_{i=1}^{\frac{\log c_{\xi}(1+o(1))}{n\log 2}x^2}{\left(1-\frac{1}{p_i^2}\right)^n}\prod_{i=1}^{\frac{\log c_{\xi}(1+o(1))}{n\log 2}x^2}{\left(1-\frac{1}{p_i}\right)^{-n}} \leq 2^ne^{n\gamma}\log^nx(1+o(1)) 
\end{split}
\end{equation}
where $p_i$ is the $i$-th prime and we used Mertens' formula as well as the fact that the first product converges. As in the proof of~\cite{AdditiveNumber}*{Lemma 7.8} we use partial summation to derive the final result:

\begin{align*}
\sum_{e=1}^{\infty} \frac{1}{e} \sum_{\substack{\mu^2(\mathfrak{a})=1 \\(\mathfrak{a},(\xi))=\mathcal{O}_K \\\ord_{\xi}(\mathfrak{a}) =e }}{\frac{1}{\mathcal{N}(\mathfrak{a})}}&=\left[\frac{E(x)}{x}\right]_{x=1}^{\infty}+\int_1^{\infty}{\frac{E(t)}{t^2}\text{d}t} \leq 2^ne^{n\gamma}(1+o(1))\int_1^{\infty}{\frac{(\log t)^n}{t^2}\text{d}t} \\
&= 2^nn!e^{n\gamma}(1+o(1)).
\end{align*}
\end{proof}

\begin{proof}[Proof of Theorem \ref{MainTheorem}]
By \eqref{RXnBoundLower} we have that $\sum_{\house{\zeta} \leq x}{r_x(\zeta)} \gg x^n$ and \eqref{RGx2Bound} and \eqref{EstablishConvergentSumEquation} together with Lemma \ref{SumConvergenceLemma} prove that $\sum_{\house{\zeta} \leq x}{r_x(\zeta)}^2 \ll x^n$. Plugging this in \eqref{CSInequality} shows that 
$$\sum_{\house{\zeta} \leq x}{\eta_x(\zeta)} \gg x^n.$$ 
To finish the proof of Theorem \ref{MainTheorem} we make use of the fact that there are $\mathcal{O}(x^n)$ algebraic integers $\zeta \in \mathcal{O}_K$ such that $\house{\zeta} \leq x$ (for a proof in the case of counting totally positive algebraic integers see~\cite{DiophantineEquations}*{Lemma 3.2}, the proof of the general case works along the same lines as is mentioned after the proof of~\cite{DiophantineEquations}*{Lemma 3.2}). We may also see this by taking $\mathfrak{a}=\mathcal{O}_K$, $\beta \in \mathcal{O}_K$ arbitrary and $\zeta = 0$ in the first part of Theorem \ref{NFSieve} from where we get that the implied constant my be chosen as $\frac{\pi^{r_2}2^{r_1+r_2}}{\sqrt{|\Delta_K|}}$. Using this and the explicit constants in \eqref{RXnBoundLower}, \eqref{RGx2Bound} and Lemma \ref{SumConvergenceLemma} we get a lower bound for the lower density of the form
$$\frac{\left(\frac{\omega}{2nh_K2^{r_1}R\log \house{\xi}}\right)^2}{\frac{\pi^{r_2}2^{r_1+r_2}}{\sqrt{|\Delta_K|}}\left(\frac{\omega}{2nh_K2^{r_1}R\log \house{\xi}}+\frac{2^{r_1+r_2+2}\pi^{r_2+2n}}{\left(\frac{1}{4}-\epsilon\right)^2\sqrt{|\Delta_K|}6^n\rho_K^2}\prod_{\xi \in \mathfrak{p}}\left(1+\frac{1}{\mathcal{N}(\mathfrak{p})}\right)\frac{1}{8\log^2\house{\xi}}2^ne^{n\gamma}n!\right)}.$$ 
\end{proof}

\section{The special case of the Gaussian integers}

To get a better constant for Gaussian integers of the form $\pi+(1+i)^k$ we need to be more careful in estimating Romanov's sum than we have been in the proof of Lemma \ref{SumConvergenceLemma}. We will apply the following two results. 

\begin{lem} \label{FirstPartLemma}
The following bound holds:
$$\sum_{e \leq 200}\frac{1}{e}\sum_{\substack{\mu^2(\mathfrak{a})=1 \\ (\mathfrak{a},(1+i))=\mathbb{Z}[i] \\ \ord_{1+i}(\mathfrak{a})=e }}\frac{1}{\mathcal{N}(\mathfrak{a})} \leq 1.27096.$$
\end{lem}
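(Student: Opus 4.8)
The plan is to reduce the claim to a finite verification built on the prime ideal factorisations of the Gaussian integers $(1+i)^d-1$, $d\le 200$. First I would reorganise the inner sum. For $d\ge 1$ put
\[
S(d):=\sum_{\substack{\mathfrak a\mid((1+i)^d-1)\\ \mu^2(\mathfrak a)=1}}\frac1{\mathcal N(\mathfrak a)}=\prod_{\mathfrak p\mid(1+i)^d-1}\left(1+\frac1{\mathcal N(\mathfrak p)}\right),
\]
the product running over the prime ideals dividing the principal ideal $((1+i)^d-1)$ (each of which is automatically coprime to $(1+i)$). For a squarefree ideal $\mathfrak a$ coprime to $(1+i)$ the Chinese Remainder Theorem gives $\ord_{1+i}(\mathfrak a)=\operatorname{lcm}_{\mathfrak p\mid\mathfrak a}\ord_{1+i}(\mathfrak p)$, so $\ord_{1+i}(\mathfrak a)\mid d$ is equivalent to $\mathfrak a\mid((1+i)^d-1)$, whence $S(d)=\sum_{\ord_{1+i}(\mathfrak a)\mid d}\mathcal N(\mathfrak a)^{-1}$ and Möbius inversion over divisors gives
\[
\sum_{\substack{\mu^2(\mathfrak a)=1,\ (\mathfrak a,(1+i))=\mathbb Z[i]\\ \ord_{1+i}(\mathfrak a)=e}}\frac1{\mathcal N(\mathfrak a)}=\sum_{d\mid e}\mu\!\left(\tfrac ed\right)S(d).
\]
Therefore the quantity to bound is
\[
\sum_{e\le 200}\frac1e\sum_{d\mid e}\mu\!\left(\tfrac ed\right)S(d)=\sum_{d\le 200}\frac{S(d)}{d}\sum_{k\le 200/d}\frac{\mu(k)}{k},
\]
the second form (obtained by interchanging summations and writing $e=dk$) being the convenient one for computation.

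Next I would make $S(d)$ effective. It suffices to know, for each $d\le 200$, which prime ideals divide $(1+i)^d-1$ and their norms. A rational prime $p\ne 2$ has a prime ideal above it dividing $(1+i)^d-1$ only if $p\mid\mathcal N((1+i)^d-1)=|(1+i)^d-1|^2$, and using $(1+i)^2=2i$ this norm is computed explicitly — it equals $(2^{d/2}-1)^2$ when $8\mid d$, $(2^{d/2}+1)^2$ when $d\equiv 4\bmod 8$, $2^{d}+1$ when $d\equiv 2\bmod 4$, and $2^{d}\pm 2^{(d+1)/2}+1$ (with a definite sign depending on $d\bmod 8$) when $d$ is odd. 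These are Cunningham- and Aurifeuillian-type numbers whose factorisations are tabulated. For each prime $p$ dividing such a norm I would then settle which prime ideals above $p$ actually divide $(1+i)^d-1$: if $p\equiv 3\bmod 4$ then $(p)$ is the unique prime above $p$, of norm $p^2$, and it divides $(1+i)^d-1$ iff the order of $1+i$ in $(\mathbb Z[i]/(p))^{\times}\cong\mathbb F_{p^2}^{\times}$ divides $d$; if $p\equiv 1\bmod 4$ then $(p)=\mathfrak p\overline{\mathfrak p}$ splits into two ideals of norm $p$, and $\mathfrak p\mid(1+i)^d-1$ iff, identifying $\mathbb Z[i]/\mathfrak p\cong\mathbb F_p$ so that $i$ goes to the appropriate square root $s$ of $-1$, the order of $1+s$ in $\mathbb F_p^{\times}$ divides $d$ — in general only one of the conjugates $\pm s$ qualifies, except when $d\equiv 0\bmod 4$, where $(1+i)^d-1$ is a rational integer and both conjugate ideals divide it. Plugging these data into the displayed sums produces the stated bound; since a prime ideal of order $e$ has norm $\equiv 1\bmod e$, hence $\ge e+1$, the terms decay and only the first few dozen $d$ require precise factorisations (the $e\le 7$ terms alone already contribute roughly $1.194$).

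Finally I would attend to rigour in the numerical step. Complete factorisations are unnecessary: a prime ideal of norm $Q$ alters $S(d)$ only by a factor $1+Q^{-1}$, so if $\mathcal N((1+i)^d-1)$ is factored only up to a trial-division bound $B$, its unfactored cofactor is a product of at most $d$ prime ideals of norm $>B$, and replacing that cofactor by the crude bound $(1+B^{-1})^{d}$ perturbs the total by less than $10^{-4}$ once $B\ge 10^{7}$, say. One must only respect signs — where an $S(d)$ occurs with a positive (resp.\ negative) Möbius coefficient it should be over- (resp.\ under-) estimated, both being available from the finitely many known prime factors. The main obstacle is thus twofold: deciding, for each split prime $p\equiv 1\bmod 4$, which of the two conjugate ideals above $p$ divides $(1+i)^d-1$ — a subtlety with no analogue over $\mathbb Z$, resolved by the order computation in $\mathbb F_p^{\times}$ above — and securing enough of the Cunningham/Aurifeuillian factorisations, or sufficiently strong bounds on their large cofactors, to pin the finite sum below $1.27095$ with the required precision.
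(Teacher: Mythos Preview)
Your proposal is correct and amounts to the same thing the paper does: the paper's entire proof of this lemma is the single sentence ``The proof of Lemma~\ref{FirstPartLemma} is done by explicit calculation using a computer algebra system.'' You have simply spelled out, with considerably more care than the paper, how such a computation can be organised rigorously---the M\"obius reformulation $\sum_{e\le 200}e^{-1}\sum_{d\mid e}\mu(e/d)S(d)=\sum_{d\le 200}d^{-1}S(d)\sum_{k\le 200/d}\mu(k)/k$, the explicit description of the norms $\mathcal N((1+i)^d-1)$ as Cunningham/Aurifeuillian numbers, the split/inert case analysis for prime ideals above a rational prime, and the interval-arithmetic handling of unfactored cofactors are all sound and are exactly the ingredients a careful implementation would need, but none of this detail appears in the paper itself.
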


The proof of Lemma \ref{FirstPartLemma} is done by explicit calculation using a computer algebra system. The second result is a bound for the remainder term of Romanov's sum and we will derive it by applying the ideas used by Chen and Sun for sums of primes and powers of $2$ (see~\cite{OnRomanoffs}*{Lemma 4}).

\begin{lem} \label{ChenSunLemma}
$$\sum_{e>200}\frac{1}{e}\sum_{\substack{\mu^2(\mathfrak{a})=1 \\ (\mathfrak{a},(1+i))=\mathbb{Z}[i] \\ \ord_{1+i}(\mathfrak{a})=e}}\frac{1}{\mathcal{N}(\mathfrak{a})} \leq 0.31038.$$
\end{lem}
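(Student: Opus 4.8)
The plan is to mimic the argument of Chen and Sun~\cite{OnRomanoffs} that controls the tail of Romanov's sum for powers of $2$ in $\mathbb Z$, transported to $\mathbb Z[i]$. First I would rewrite the inner sum over squarefree ideals $\mathfrak a$ with $\ord_{1+i}(\mathfrak a)=e$ in terms of the single principal ideal $((1+i)^e-1)$: since $\mathfrak a$ squarefree, coprime to $(1+i)$, with $\ord_{1+i}(\mathfrak a)=e$ forces $\mathfrak a \mid ((1+i)^e-1)$ and $\mathfrak a$ squarefree, we get
\[
\sum_{\substack{\mu^2(\mathfrak a)=1\\ (\mathfrak a,(1+i))=\mathbb Z[i]\\ \ord_{1+i}(\mathfrak a)=e}}\frac{1}{\mathcal N(\mathfrak a)}
\;\le\; \prod_{\mathfrak p \mid ((1+i)^e-1)}\left(1+\frac{1}{\mathcal N(\mathfrak p)}\right)
\;\le\; \prod_{\mathfrak p \mid ((1+i)^e-1)}\left(1-\frac{1}{\mathcal N(\mathfrak p)}\right)^{-1}.
\]
Then I would bound the last product by $\frac{\mathcal N(((1+i)^e-1))}{\varphi_K(((1+i)^e-1))}$ and use the standard estimate $\frac{m}{\varphi(m)}\ll \log\log m$, which by Lemma \ref{NormNormLemma} (giving $\mathcal N((1+i)^e-1)\le c\,\house{1+i}^{2e}=c\,2^{e}$) yields an inner sum of size $O(\log e)$. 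This already shows $\sum_{e>200}\frac{1}{e}(\cdots)$ converges, but with far too weak a constant; the point of the lemma is the explicit numerical value $0.57749$, so the crude $\log\log$ bound must be replaced by something sharp.

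The refinement, following Chen--Sun, is to exploit that a prime ideal $\mathfrak p$ with $\mathfrak p \mid ((1+i)^e-1)$ has $\ord_{1+i}(\mathfrak p) \mid e$, and conversely $\ord_{1+i}(\mathfrak p)=d$ implies $\mathfrak p \mid ((1+i)^d-1)$. Swapping the order of summation, one writes the whole tail as a sum over $d$ (the order of a prime) weighted by $\sum_{e>200,\ d\mid e}\frac1e = \frac1d\sum_{m > 200/d}\frac1m$ truncated appropriately, which contributes a factor like $\frac1d\log\!\frac{?}{d}$ for small $d$ and is genuinely summable because for each prime $\mathfrak p$ the multiplicative contribution $\frac{1}{\mathcal N(\mathfrak p)}$ is attached to $d=\ord_{1+i}(\mathfrak p)$, and $\mathcal N(\mathfrak p) \gg$ grows with $d$ (a prime of order $d$ divides $(1+i)^d-1$, so up to the finitely many divisors of $d$ its norm is $\gg$ a large power, giving $\sum_{\ord=d}\frac{1}{\mathcal N(\mathfrak p)} \ll d\,2^{-\lfloor d/(\text{largest proper divisor})\rfloor}$ or similar). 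I would split the range $e>200$ into a ``moderate'' block $200 < e \le E_0$ handled by the same explicit ideal-theoretic computation as in Lemma \ref{FirstPartLemma} (pushing the computer algebra computation further), and a ``large'' block $e > E_0$ bounded analytically by a convergent geometric-type series whose sum is made numerically small by choosing $E_0$ large; adding the two explicit pieces must come in under $0.57749$.

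The main obstacle is the explicit bookkeeping in the large-$e$ block: one needs a clean, fully effective inequality of the shape
\[
\sum_{e>E_0}\frac1e\prod_{\mathfrak p\mid((1+i)^e-1)}\left(1+\frac{1}{\mathcal N(\mathfrak p)}\right)
\;\le\; \sum_{e>E_0}\frac1e\left(A\log e + B\right)\cdot(\text{saving from }\ord\text{-structure})
\]
with all constants $A,B$ pinned down (via the explicit Mertens constant for $\mathbb Q(i)$, $\rho_{\mathbb Q(i)}=\pi/4$, and the explicit Euler-product bound \eqref{ImprovedZIBoundTatuzawa}), and then to verify that the resulting series, evaluated numerically together with the moderate block, respects the stated bound. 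The delicate point is that $\frac1e\log e$ alone is \emph{not} summable, so the argument genuinely relies on the extra saving coming from the fact that primes dividing $(1+i)^e-1$ for the ``new'' orders $e$ are large — quantifying this saving effectively, uniformly in $e$, is where the real work lies; Chen and Sun's Lemma 4 is the template, and I expect the $\mathbb Z[i]$ case to go through with the norm of $1+i$ being $2$ playing exactly the role $2$ plays over $\mathbb Q$, so that the constants transform transparently.
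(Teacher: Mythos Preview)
Your proposal correctly isolates the obstacle --- that the term-by-term bound $\sum_{\ord=e}1/\mathcal N(\mathfrak a)\le\prod_{\mathfrak p\mid((1+i)^e-1)}(1+1/\mathcal N(\mathfrak p))\ll\log e$ leaves a divergent $\sum\frac{\log e}{e}$ --- but your proposed remedy (swapping to a sum over primes indexed by their order $d$ and exploiting that ``new'' primes of large order have large norm) is not what Chen--Sun do, and you leave precisely the decisive step as an unquantified sketch (``where the real work lies''). That is a genuine gap: nothing in your outline produces an explicit numerical tail bound, and your further split into $200<e\le E_0$ (more computation) plus $e>E_0$ (analytic) still requires the same missing ingredient for the second piece.

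The paper's argument, following Chen--Sun, avoids the divergence by bounding \emph{partial sums} rather than individual terms. With $E(x)=\sum_{e\le x}\sum_{\ord_{1+i}(\mathfrak a)=e}1/\mathcal N(\mathfrak a)$ one has $E(x)\le\prod_{\mathfrak p\mid D}(1+1/\mathcal N(\mathfrak p))$ for $D=\prod_{\lceil x/2\rceil\le k\le x}((1+i)^k-1)$; the number $m$ of distinct prime ideal divisors of $D$ is controlled via $p_1^2\cdots p_{\lfloor m/2\rfloor}^2\le\mathcal N(D)\le 2\prod 2^k$, and an explicit Chebyshev-type inequality of Schoenfeld then gives $p_{\lfloor m/2\rfloor}<0.130448\,x^2$. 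An explicit Mertens bound for odd primes yields $E(x)\le 3.997993\log^2 x-7.503313\log x+3.5206$. Abel summation now gives
\[
\sum_{e>200}\frac{1}{e}\sum_{\ord=e}\frac{1}{\mathcal N(\mathfrak a)}
\;=\;-\frac{E(200)}{200}+\int_{200}^{\infty}\frac{E(t)}{t^2}\,dt,
\]
and since $\log^2 t/t^2$ \emph{is} integrable, this evaluates (together with the exact computation $E(200)\ge 3.33018$) to at most $0.57749$. The idea you missed is that the cumulative bound $E(x)=O(\log^2 x)$, not any additional saving from the order structure, is what makes the tail converge after partial summation.
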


\begin{proof}
As in Lemma \ref{SumConvergenceLemma} we start with a product of the form
$$D=\prod_{k=\lceil \frac{x}{2}\rceil}^x((1+i)^k-1).$$
Here as Chen and Sun \cite{OnRomanoffs} we note that it suffices to
start with $k=\lceil \frac{x}{2} \rceil$. We want to deduce an upper
bound for the largest prime factor dividing $\mathcal{N}(D)$. Suppose that $D$ has exactly $m$ distinct prime
factors $\pi_1, \ldots, \pi_m$ where we note that their norm is odd
since $1+i$ and its associates do not divide $D$. Then we have
$$p_1^2 \cdot \ldots \cdot p_{\lfloor\frac{m}{2}\rfloor}^2 \leq \mathcal{N}(\pi_1) \cdot \ldots \cdot \mathcal{N}(\pi_m) \leq \prod_{k=\lceil\frac{x}{2}\rceil}^x{(\house{1+i}^k+1)^2} \leq 2\prod_{k=\lceil \frac{x}{2}\rceil}^x{2^k},$$
where the $p_i$ are the
first $\lfloor \frac{m}{2} \rfloor$ odd primes and the last inequality holds for $x \geq 6$. Taking the logarithm
on both sides of the last inequality yields
$$2\sum_{i=1}^{\lfloor \frac{m}{2} \rfloor}\log p_i \leq \log2\left(1+\sum_{k=\lceil \frac{x}{2} \rceil}^xk\right).$$
As Chen and Sun \cite{OnRomanoffs} we will first use computational
methods to get a lower bound for the prime number $p_{\lfloor \frac{m}{2} \rfloor}$ and
apply~\cite{SharperBounds}*{Corollary 2*} to get an explicit upper
bound for $p_{\lfloor \frac{m}{2} \rfloor}$ altogether. Using a computer algebra system it is easy
to verify that for $x=200$ the product $D$ has exactly $419$ different
prime factors and that the $209$-th odd prime is $p_{209}=1291$. This
together with the bound from~\cite{SharperBounds}*{Corollary 2*}
implies that
$$2p_{\lfloor \frac{m}{2} \rfloor}\left(1-\frac{2}{5\cdot \log(1291)}\right)<\log 2\left(1+\sum_{\lceil \frac{x}{2}\rceil}^xk\right)$$
and hence for $x \geq 200$
$$p_{\lfloor \frac{m}{2} \rfloor}<\frac{\log 2}{2}\left(1-\frac{2}{5\cdot \log(1291)}\right)^{-1}\frac{15151}{40000}x^2<0.13904x^2.$$

For $E(x)$ defined as in \eqref{EDefinitionEquation} in the proof of Lemma \ref{SumConvergenceLemma} analogously as in equation \eqref{EEquation} we get that
$$E(x) \leq \prod_{i=1}^{\lfloor \frac{m}{2} \rfloor}\left(1-\frac{1}{p_i^2}\right)^2\prod_{i=1}^{\lfloor \frac{m}{2} \rfloor}\left(1-\frac{1}{p_i}\right)^{-2}.$$
According to~\cite{OnRomanoffs}*{Lemma 3} we have for $x\geq 74$
$$\prod_{3 \leq p \leq x}\left(1-\frac{1}{p}\right)^{-1} \leq 0.922913686 \log x.$$
Furthermore with $\lfloor \frac{m}{2}\rfloor =209$ we have that
$$\prod_{i=1}^{\lfloor \frac{m}{2} \rfloor}\left(1-\frac{1}{p_i^2}\right)^2 \leq 0.65715$$
by direct computation. Altogether this implies that
$$E(x) \leq 2.23897 \log^2x- 4.41745 \log x + 2.17891$$
Using a computer algebra system it is easy to verify that $E(200) \geq 3.33018$ and using partial summation as in Lemma \ref{SumConvergenceLemma} we get that
\begin{align*}
\sum_{e>200}\frac{1}{e}\sum_{\substack{\mu^2(\mathfrak{a})=1 \\ (\mathfrak{a},(1+i))=\mathbb{Z}[i] \\ \ord_{1+i}(\mathfrak{a})=e}}\frac{1}{\mathcal{N}(\mathfrak{a})} &\leq -\frac{E(200)}{200}+\int_{200}^{\infty}{\frac{2.23897 \log^2x- 4.41745 \log x + 2.17891}{x^2} \text{d}x} \\
&\leq 0.31038.
\end{align*}
\end{proof}

\begin{proof}[Proof of Theorem \ref{ZIExplicitBoundTheorem}]
We need to work out constants $c_1$, $c_2$ and $c_3$ with 
$$\sum_{\house{\zeta} \leq x}{r_x(\zeta,1+i)} \sim c_1 x^2 \text{, }\sum_{\house{\zeta} \leq x}{r_x(\zeta,1+i)^2} \leq c_2 x^2 \text{ and } \sum_{\house{\zeta} \leq x}1 \sim c_3x^n.$$
Using inequality \eqref{CSInequality} a lower bound for the lower density is then given by $\frac{c_1^2}{c_2c_3}$. 

Computing the constant $c_3$ reduces to counting integral lattice
points within the circle of radius $x$. It is well known that
$\sum_{\house{\zeta} \leq x}1 \sim \pi x^n$ where we note that the
asymptotic rate of growth of the error term is linked to the Gauss
circle problem which is frequently listed among famous open problems
in number theory (see for example~\cite{UnsolvedProblems}*{p. 365
ff}). The constant $c_3$ may therefore be chosen as
$c_3=\pi$.

With $\house{1+i}=\sqrt{2}$ equation \eqref{RXnBoundLower} yields that
$c_1=\frac{2}{\log 2}$ (note that there are four roots of unity in
$\mathbb{Z}[i]$, the regulator as well as the class number are $1$,
$r_1=0$ and $r_2=1$) and it remains to determine the constant $c_2$.
 
The constant $c_2$ is the sum of the constant in equation
\eqref{RXnBoundUpper} which is $\frac{2}{\log 2}$ and the product of
the constant $\tilde{c}_1=\frac{1}{2\log^22}$ in equation
\eqref{EstablishConvergentSumEquation}, the constant
$\kappa=368.333$ from Theorem \ref{NFSieve}, the sum of the constants in Lemmas \ref{FirstPartLemma} and \ref{ChenSunLemma} and the constant
\[\tilde{c_4}=\prod_{1+i \in
  \mathfrak{p}}\left(1+\frac{1}{\mathcal{N}(\mathfrak{p})}\right)=\frac{3}{2}.\]
\end{proof}

It remains to prove Theorem \ref{ErdosVanDerCorputAnalogueTheorem}. To
do so we will apply Erd\H{o}s' idea (see the proof of~\cite{OnIntegersRelated}*{Theorem
3}) of using covering congruences, i.e. a system of
$l$ congruences $a_j \bmod m_j$ for $a_j,m_j \in \mathbb{N}$ such that
$$\mathbb{N} \subset \bigcup_{j=1}^l\bigcup_{\lambda = 0}^{\infty}(a_j+\lambda m_j).$$
In Nathanson's book the details of Erd\H{o}s' proof were worked out
(see~\cite{AdditiveNumber}*{p. 204 ff}) and the reader can find a proof of the
following Lemma there (see~\cite{AdditiveNumber}*{Lemma 7.11}).

\begin{customlem}A \label{CoveringCongruencesLemma}
The six congruences $0 \bmod 2$, $0 \bmod 3$, $1 \bmod 4$, $3 \bmod 8$, $7 \bmod 12$ and $23 \bmod 24$ form a set of covering congruences.
\end{customlem}

As Erd\H{o}s we will use the covering congruences in Lemma \ref{CoveringCongruencesLemma} to construct a set with positive density in $\mathbb{Z}[i]$ consisting of Gaussian integers of odd norm not of the form $\pi + (1+i)^k$ for $\pi\in \mathcal{P}_{\mathbb{Q}(i)}$.

\begin{proof}[Proof of Theorem \ref{ErdosVanDerCorputAnalogueTheorem}]
We start by choosing prime elements $\pi_j \in \mathbb{Z}[i]$ such that $(1+i)^{m_j}-1 \in (\pi_j)$ where the $m_j$ are the moduli appearing in the covering congruences in Lemma \ref{CoveringCongruencesLemma}. We will use the following choice of primes $\pi_j$:
\begin{align*}
(1+i)^2 &\equiv 1 \bmod (2+i) &(1+i)^3 &\equiv 1 \bmod (2+3i) \\ 
(1+i)^4 &\equiv 1 \bmod (1+2i) &(1+i)^8 &\equiv 1 \bmod (3) \\
(1+i)^{12} &\equiv 1 \bmod (3+2i) &(1+i)^{24} &\equiv 1 \bmod (7).
\end{align*}

By the Chinese Remainder theorem the following congruences describe a unique residue class modulo $m = -(1+i)\prod_{j=1}^6{\pi_j}=990+990i$ in $\mathbb{Z}[i]$:
\begin{align*}
x &\equiv 1 \bmod (2+i) &x &\equiv 1 \bmod (2+3i) &x &\equiv 1+i \bmod (1+2i) \\
x &\equiv -2+2i \bmod (3) &x &\equiv 8-8i \bmod (3+2i) &x &\equiv 2048-2048i \bmod (7) 
\end{align*}
and $x \equiv 1 \bmod (1+i)$. Note that the first $6$ of the previous $7$ congruences are of the form $(1+i)^{a_j} \bmod \pi_j$ where the $a_j$ are the residue classes from Lemma \ref{CoveringCongruencesLemma}. Since the congruences in Lemma \ref{CoveringCongruencesLemma} are a covering system and by our choice of $x$ whenever we subtract a power of $1+i$ from any element in our arithmetic progression we get an element divisible by one of the primes $\pi_j$.

Because of $x \equiv 1 \bmod (1+i)$ we have that $\mathcal{N}(x)$ is odd. This holds true since $x$ is of the form $1+(1+i)(a+ib)$ for $a,b \in \mathbb{Z}$, $(1+i)(a+ib)=a-b +i(a+b)$ and $a-b$ and $a+b$ are of the same parity.

Thus non of the elements of the residue class $x \bmod (m)$ is of the form $\pi +(1+i)^k$ for $\pi \in \mathcal{P}_{\mathbb{Q}(i)}$ with exceptions being algebraic integers of the form $\pi_j+(1+i)^k$ for $1 \leq j \leq 6$. Since those $6$ sets have density $0$ in $\mathbb{Z}[i]$ we are done.
\end{proof}

\section*{Acknowledgment}

The second author is supported by the Austrian Science Fund (FWF): W1230, Doctoral Program `Discrete Mathematics'.

Parts of this research work were done when the second author was
visiting the Institut \'Elie Cartan de Lorraine of the University of
Lorraine. The author thanks the institution for its hospitality.

% \bib, bibdiv, biblist are defined by the amsrefs package.
\begin{bibdiv}
\begin{biblist}

\bib{OnRomanoffs}{article}{
      author={Chen, Y.-G.},
      author={Sun, X.-G.},
       title={On {R}omanoff's constant},
        date={2004},
     journal={J. Number Theory},
      volume={106},
       pages={275 \ndash  284},
}

\bib{OnRomanovs}{article}{
      author={Elsholtz, C.},
      author={Schlage-Puchta, J.-C.},
       title={On Romanov's constant},
        date={2017},
     journal={Math. Z.},
      note={\url{https://doi.org/10.1007/s00209-017-1908-x}},
}

\bib{OnIntegersRelated}{article}{
      author={Erd{\H{o}}s, P.},
       title={On integers of the form $2^k+p$ and some related problems},
        date={1950},
     journal={Summa Brasil. Math.},
      volume={2},
       pages={113\ndash 123},
}

\bib{LetterTo}{misc}{
      author={Euler, L.},
       title={Letter to {G}oldbach, 16.12.1752},
  note={\url{http://eulerarchive.maa.org/correspondence/letters/OO0879.pdf}},
}

\bib{UnsolvedProblems}{book}{
      author={Guy, R.},
       title={Unsolved problems in number theory},
      series={Problem Books in Mathematics},
   publisher={Springer New York},
        date={2013},
         url={https://books.google.at/books?id=1BnoBwAAQBAJ},
}

\bib{OnIntegers}{article}{
      author={Habsieger, L.},
      author={Roblot, X.-F.},
       title={On integers of the form $p+2^k$},
        date={2006},
     journal={Acta Arith.},
      volume={122},
       pages={45\ndash 50},
}

\bib{AnEffective}{article}{
      author={Habsieger, L.},
      author={Sivak-Fischler, J.},
       title={An effective version of the {B}ombieri-{V}inogradov theorem, and
  applications to {C}hen's theorem and to sums of primes and powers of two},
        date={2010},
     journal={Arch. Math. (Basel)},
      volume={95},
      number={6},
       pages={557\ndash 566},
}

\bib{GeneralisedMertens}{article}{
      author={{Lebacque}, P.},
       title={{Generalised Mertens and Brauer-Siegel theorems}},
        date={2007},
     journal={Acta Arith.},
      volume={130},
       pages={333\ndash 350},
}

\bib{GeneralizedPrime}{article}{
      author={Mitsui, Takayoshi},
       title={Generalized prime number theorem},
        date={1956},
     journal={Jap. J. Math.},
      volume={26},
       pages={1\ndash 42},
}

\bib{ElementaryAnd}{book}{
      author={Narkiewicz, W{\l}adys{\l}aw},
       title={Elementary and {A}nalytic {T}heory of {A}lgebraic {N}umbers},
     edition={Second Edition},
   publisher={Springer-Verlag},
        date={1990},
}

\bib{AdditiveNumber}{book}{
      author={Nathanson, M.~B.},
       title={Additive {N}umber {T}heory - {T}he {C}lassical {B}ases},
   publisher={Springer},
     address={New York},
        date={1996},
}

\bib{ANote}{article}{
      author={Pintz, J.},
       title={A note on {R}omanov's constant},
        date={2006},
     journal={Acta Math. Hungar.},
      volume={112},
      number={1--2},
       pages={1\ndash 14},
}

\bib{Primzahlverteilung}{book}{
      author={Prachar, Karl},
       title={Primzahlverteilung},
   publisher={Springer Verlag Berlin-G\"ottingen-Heidelberg},
        date={1957},
}

\bib{UeberDie}{article}{
      author={Rademacher, Hans},
       title={{\"U}ber die {A}nwendung der {V}iggo {B}runschen {M}ethode auf
  die {T}heorie der algebraischen {Z}ahlk\"orper},
        date={1923},
     journal={Berliner Akademie der Wissenschaften, Sitzungsberichte},
       pages={211\ndash 218},
}

\bib{VariationsOn}{article}{
      author={Ram~Murty, M.},
      author={Rosen, M.},
      author={Silverman, J.~H.},
       title={Variations on a theme of {R}omanoff},
        date={1996},
        ISSN={0129-167X},
     journal={Internat. J. Math.},
      volume={7},
      number={3},
       pages={373\ndash 391},
         url={http://dx.doi.org/10.1142/S0129167X96000220},
}

\bib{VerallgemeinerungDer1}{article}{
      author={Rieger, G.~J.},
       title={Verallgemeinerung der {S}iebmethode von {A}. {S}elberg auf
  algebraische {Z}ahlk\"orper. {I}},
        date={1958},
     journal={J. Reine Angew. Math.},
      volume={199},
       pages={208\ndash 214},
}

\bib{VerallgemeinerungDer2}{article}{
      author={Rieger, G.~J.},
       title={Verallgemeinerung der {S}iebmethode von {A}. {S}elberg auf
  algebraische {Z}ahlk\"orper. {II}},
        date={1959},
     journal={J. Reine Angew. Math.},
      volume={201},
       pages={157\ndash 171},
}

\bib{VerallgemeinerungDer3}{article}{
      author={Rieger, G.~J.},
       title={Verallgemeinerung der {S}iebmethode von {A}. {S}elberg auf
  {A}lgebraische {Z}ahlk\"orper. {III}},
        date={1961},
     journal={J. Reine Angew. Math.},
      volume={208},
       pages={79\ndash 90},
}

\bib{VerallgemeinerungZweier}{article}{
    author={Rieger, G.~J.},
     title={Verallgemeinerung zweier {S}\"atze von {R}omanov aus der
              additiven {Z}ahlentheorie},
   journal={Math. Ann.},
    volume={144},
      year={1961},
     pages={49--55},
}

\bib{UeberEinige}{article}{
      author={Romanoff, N.~P.},
       title={{\"U}ber einige {S\"a}tze der additiven {Z}ahlentheorie},
        date={1934},
     journal={Math. Ann.},
      volume={109},
       pages={668\ndash 678},
}

\bib{SharperBounds}{article}{
      author={Schoenfeld, L.},
       title={Sharper {B}ounds for the {C}hebyshev {F}unctions $\theta(x)$ and
  $\psi(x)$. {II}},
        date={1976},
     journal={Math. Comp.},
      volume={30},
       pages={337 \ndash  360},
}

\bib{AnExplicit}{article}{
      author={{Shparlinski}, I.~E.},
      author={{Weingartner}, A.~J.},
       title={{An explicit polynomial analogue of Romanoff's theorem}},
     journal={Finite Fields Appl.},
      volume={44},
        year={2017},
       pages={22--33},
}

\bib{UeberDieSummierung}{article}{
      author={Sierpi\'{n}ski, Wac{\l}av},
       title={O sumowaniu szeregu $\sum_{n>a}^{n \leq b}{\tau(n)f(n)}$, gdzie
  $\tau(n)$ oznacza liczb\c{e} rozk{\l}ad\'{o}w liczby $n$ na sum\c{e}
  kwadrat\'{o}w dw\'{o}ch liczb ca{\l}kowitych},
        date={1907},
     journal={Prace matematyczno-fizyczne},
      volume={18},
       pages={1\ndash 60},
}

\bib{AdditivePrime}{article}{
      author={Tatuzawa, T.},
       title={Additive prime number theory in an algebraic number field},
        date={1955},
        ISSN={0025-5645},
     journal={J. Math. Soc. Japan},
      volume={7},
       pages={409\ndash 423},
}

\bib{OverHet}{article}{
      author={van~der Corput, J.~G.},
       title={Over het vermoeden van de {P}olignac},
        date={1950},
     journal={Simon Stevin},
      volume={27},
       pages={99\ndash 105},
}

\bib{DiophantineEquations}{book}{
      author={Wang, Yuan},
       title={{D}iophantine {E}quations and {I}nequalities in {A}lgebraic
  {N}umber {F}ields},
   publisher={Springer-Verlag},
        date={1991},
}

\end{biblist}
\end{bibdiv}

\end{document}